\newtheorem{theorem}{Theorem}
\newtheorem{proposition}{Proposition}
\newtheorem{corollary}{Corollary}
\newtheorem{lemma}{Lemma}
\newtheorem{remark}{Remark}
\author{Wang Changlong, Zhou Feng}
\title{Exact sparse reconstruction form Vandermonde matrices}
\begin{document}\large
\title{Exact sparse reconstruction form Vandermonde matrices}
\date{}
\maketitle
\begin{abstract}
As a conclusion in classical linear algebra, an underdetermined linear equations usually have an infinite number of solutions. The sparest one among these solutions is significant in many applications. This problem can be modeled as the following $l_0$-minimization,
\begin{eqnarray}
\notag\mathop{\min}\limits_{x \in \mathbb{R}^m} \|x\|_0 \ s.t. \ Ax=b.
\end{eqnarray}
However, to find the sparsest solution of an underdetermined linear equations is NP-hard. Therefore, an important approach to solve the following $l_p$-minimization ($0<p\leq1$),
\begin{eqnarray}
\notag\mathop{\min}\limits_{x \in \mathbb{R}^m} \|x\|_p^p \ s.t. \ Ax=b,
\end{eqnarray}
The purpose of this problem is to find a $p$-norm minimization solution $(0<p\leq1)$ instead of the sparest one.

In this paper, we study the equivalence relationship between $l_0$-minimization and $l_p$-minimization with a high sparse level. Compared with most of related work which adopt Restricted Isometry Property (RIP) and Restricted Isometry Constant (RIC), these result only solve the situation when the solution $\breve{x}$ of $l_0$-minimization satisfies that $\|\breve{x}\|_0<spark(A)/2$. Therefore, the main contribution of this paper is to give an analytic expression $p^*$ such that $l_p$-minimization is equivalent to $l_0$-minimization when $\|\breve{x}\|_0<\frac{spark(A)}{2}$, and we also prove this result is true even $\|\breve{x}\|_0>\frac{spark(A)}{2}$ for Vandermonde matrices.

Compared with the similar results based on RIP and RIC, the result of this paper do not need the uniqueness assumption, i.e., the solution $x^*$ of $l_0$-minimization do not have to be assumed to be the unique solution which is the main breakthrough in our result. Another superiority of our result is its computability, i.e., each part in the analytic expression can be easily calculated.
\end{abstract}
\textbf{keywords:} sparse recovery, Vandermonde matrix, spark, $l_p$-minimization
\section{Introduction}
As a conclusion in classical linear algebra, the underdetermind linear equations $Ax=b$ usually admit an infinite number of solutions. To find the sparsest one in these solutions is actually the is the key issue in many applications such as visual coding \cite{olshausen1996emergence}, matrix completion \cite{candes2009exact}, source localization \cite{malioutov2005sparse}, and face recognition \cite{wright2009robust}, all these problems are popularly modeled into the following $l_0$-minimization:
\begin{eqnarray}
\mathop{\min}\limits_{x \in \mathbb{R}^n} \|x\|_0\ s.t. \ Ax=b
\end{eqnarray}

where $A \in \mathbb{R}^{m \times n}$ is an underdetermined matrix (i.e. $m<n$), and $\|x\|_0$ indicates the number of nonzero elements of $x$, which is commonly called $l_0$-norm although it is not a real vector norm.

However, Natarajan \cite{natarajan1995sparse} proved that to find the sparest solution of an underdetermind linear equations is NP-hard and $l_0$-minimization is also combinational and computationally intractable because of the discrete and discontinuous nature. Therefore, alternative strategies to find sparest solution have been put forward (see, for example \cite{candes2008restricted,candes2006near,foucart2009sparsest,donoho2005sparse,tropp2004greed,petukhov2006fast,candes2005decoding}), Gribuval and Nielsen \cite{gribonval2003sparse} adopted $l_p$-minimization with $0<p\leq1$,
\begin{eqnarray}
\mathop{\min}\limits_{x \in \mathbb{R}^m} \|x\|_p^p \ s.t. \ Ax=b
\end{eqnarray}
where $\|x\|_p^p=\sum_{i=1}^m |x_i|^p$. In the literature, $\|x\|_p$ is still called $p$-norm of $x$ though it is only a quasi-norm when $0<p<1$ (because in this case it violates the triangular inequality). Due to the fact that $\|x\|_0=\mathop{lim}\limits_{p \to 0} \|x\|_p^p$, it seems to be more natural to consider $l_p$-minimization instead of $l_0$-minimization than others methods and it is important to choose a suitable $p$ in $l_p$-minimization to ensure the solution of $l_p$-minimization can also solve $l_0$-minimization.
\subsection{Related work}

In order to study the equivalence relationship between $l_0$-minimization and $l_p$-minimization, most of related work adopt Restricted Isometry Property (RIP). A matrix $A$ is said to have Restricted Isometry Property (RIP) of order $k$ with Restricted Isometry Constant (RIC) $\delta_k \in (0,1)$, if $\delta_k$ is the smallest constant such that
\begin{eqnarray}\label{RIP}
(1-\delta_k)\|x\|_2 \leq \displaystyle\|Ax\|_2 \leq (1+\delta_k)\|x\|_2
\end{eqnarray}
for every $k$-sparse vector $x$, where a vector $x$ is said $k$-sparse if $\|x\|_0\leq k.$

Cand\`{e}s and Tao \cite{candes2006near,candes2008restricted} showed that every $k$-sparse vector can be recovered via $l_1$-minimization as long as $\delta_{3k}+3\delta_{4k}<2$ or $\delta_{2k}<{\sqrt{2}-1}$. Foucart \cite{foucart2009sparsest} improved the latter inequality and established exact recovery of $k$-sparse vector via $l_1$-minimization under the condition $\delta_{2k}<2(3-\sqrt{2})/7$.
Fourcart \cite{foucart2009sparsest} proved that the condition $\delta_{2k}<0.4531$ can guarantee exact $k$-sparse recovery via $l_p$-minimization for any $0<p<1$. Chartrand \cite{chartrand2007exact} claimed that a $k$-sparse vector can be recovered by $l_p$-minimization for some $p > 0$ small enough provided $\delta_{2k+1} < 1$.

It should be pointed out that these results based on RIP and RIC do not solve this problem completely. It is NP-hard to judge whether nor not a given matrix $A$ satisfies RIP, and it is also NP-hard to get RIC for a given matrix $A$ which is even satisfied with RIP. For a given matrix $A$ satisfied with RIP of order $2k$, it is obvious that $2k<spark(A)$ where $spark(A)$ is the smallest number of columns from $A$ which are linearly dependent, and the results based on RIP only study the case where this unique solution is $k$-sparse with $k<\frac{spark(A)}{2}$, and $l_0$-minimization only has an unique solution. However, we need to realize that the uniqueness assumption is not always certainly tenable. Furthermore, Peng, Yue and Li \cite{peng2015np} have proved that there exists a constant $p(A,b)>0$, such that every a solution of $l_p$-minimization is also the solution of $l_0$-minimization whenever $0<p<p(A,b)$. This result builds a bridge between $l_p$-minimization and $l_0$-minimization, and what is important is that this conclusion is not limited by the uniqueness assumption. However, Peng just proves the existence of such $p$, he does not give us a computable expression of such $p$. Therefore, the purpose of this paper is give a completely answer to this problem for some specific matrices.
\subsection{Main contribution in this paper }

Throughout this paper, for a given vector $\lambda=(\lambda_1,\lambda_2,...,\lambda_n)^T\in \mathbb R^n$ with $\lambda_i\neq0$ $(i\in \{1,2,...,n\})$, we define a Vandermonde matrix $A(m,n,\lambda)\in \mathbb R^{m \times n}$ by
\begin{equation}\label{Van Matrix}
A(m,n,\lambda)=\left(
  \begin{array}{cccc}
    1 & 1 & ... & 1\\
    \lambda_1 & \lambda_2 & ... & \lambda_n\\
    \lambda_1^2 & \lambda_2^2 & ... & \lambda_n^2\\
    \vdots & \vdots& \ddots & \vdots\\
    \lambda_1^{m-1} & \lambda_2^{m-1} & ... & \lambda_n^{m-1}\\
  \end{array}
\right).
\end{equation}
As we know, Vandermonde matrices are widely used in many applications \cite{Marques2016Sampling,Qiao2017Gridless,Fuchs2005Sparsity}. Especially, the sparse recovery methodS of Inverse Synthetic Aperture Radar (ISAR) Imaging adopt the Vandermonde as the measurement matrices \cite{Liu2019A,Wang2018Super}
Another reason why we consider a Vandermonde matrix is due to bounded orthonormal systems (Chapter 11, \cite{foucart2013mathematical}). There are many examples in bound orthonormal systems with the similar structures to a Vandermonde matrix, and one of these important examples is Fourier matrices. In this paper, we assume $x^*$ is one of the solutions of $l_0$-minimization for a given Vandermonde matrix $A(m,n,\lambda)$,
\begin{eqnarray}
\mathop{\min}\limits_{x \in \mathbb{R}^m} \|x\|_0 \ s.t. \ A(m,n,\lambda)x=b,
\end{eqnarray}
and we present an analytic expression of $p^*$ such that $\|x^*\|_p^p<\|x+h\|_p^p$ for any nonzero vector $h\in N(A)$ whenever $0<p<p^*$. 
\subsection{Notation}
For convenience, for $x \in \mathbb R^n$, we define its support by $support\ (x)=\{i:x_i \neq 0\}$ and the cardinality of set S by $|S|$.
Let $Ker(A)=\{x \in \mathbb R^n:Ax=0\}$ be the null space of matrix $A$, denote by $\lambda_{min^+}(A)$ the minimum nonzero absolute-value eigenvalue of $A^TA$ and by $\lambda_{max}(A)$ the maximum one. We also use the subscript notation $x_S$ to denote such a vector that is equal to $x$ on the index set $S$ and zero everywhere else.

In this paper, we define a Vandermonde matrix by (\ref{Van Matrix}), and use $A$ represents an ordinary underdetermined matrix. We denote the smallest number of columns from $A$ that are linearly dependent by $spark(A)$.
\section{Preliminaries}
In this section, we will focus on introducing some lemmas and definitions. In a mathematical sense, the left side of the inequality in the definition of RIP (\ref{RIP}) is crucial, especially in the proof of a large number of theorems in sparse representation theory. However, we do not adopt RIP (\ref{RIP}) in this paper because it can not be applied to the case where $\|x\|_0>\frac{spark(A)}{2}$, we need to change this equality into a new form which seems to be more reasonable. 

Usually, for an underdetermined equations we can not conclude that
\begin{eqnarray}{\label{21}}
\lambda_{min^+}(A)\|x\|_2^2\leq\|Ax\|_2^2,
\end{eqnarray}
for every vector $x\notin N(A)$.
Because it is obvious that  
\begin{eqnarray}
\lim \limits_{t\rightarrow +\infty} \frac{\|A(\tilde{x}+t\cdot\tilde{h}\|_2)}{\|\tilde{x}+t\cdot\tilde{h}\|_2}=0
\end{eqnarray}
for any vector $\tilde{x}\notin N(A)$ and a vector $\tilde{h}\in N(A)$. In order to overcome this difficulty, we will study the condition under which the inequality (\ref{21}) is satisfied. We first present a obvious fact in the following lemma and give a simple proof.
\begin{lemma}\label{L1}
Given an underdetermined matrix $A \in \mathbb{R}^{m \times n}$, then there exist two constants $0<u\leq w$ with
\begin{eqnarray}
0<\lambda_{min^{+}}(A)\leq u^2\leq w^2 \leq \lambda_{max}(A),
\end{eqnarray}
such that
\begin{eqnarray}
 u^2\|x\|_2^2 \leq \|Ax\|_2^2 \leq w^2\|x\|_2^2,
\end{eqnarray}
holds for every $x\in \mathbb{R}^n$ with $\|x\|_0<spark(A)$.
\end{lemma}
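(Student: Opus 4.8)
The statement separates into an upper and a lower estimate, which call for different ideas; the sparsity hypothesis $\|x\|_0<spark(A)$ is what drives the lower one. For the upper bound I would not even use sparsity: since $A^TA$ is symmetric and positive semidefinite, the Rayleigh--Ritz inequality gives $\|Ax\|_2^2=x^T(A^TA)x\le\lambda_{max}(A)\|x\|_2^2$ for \emph{every} $x\in\mathbb R^n$. Thus the right-hand inequality holds on the whole space with $w^2=\lambda_{max}(A)$, which simultaneously certifies $w^2\le\lambda_{max}(A)$. If a sharper $w$ is wanted, one can instead take $w^2=\max_{|S|<spark(A)}\lambda_{max}(A_S^TA_S)$, where $A_S$ denotes the column submatrix of $A$ indexed by $S$; Cauchy interlacing for principal submatrices of $A^TA$ then still yields $w^2\le\lambda_{max}(A)$.

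The lower bound is where I would spend the effort, and the plan is a finiteness-plus-positivity argument. By the definition of $spark(A)$, every collection of fewer than $spark(A)$ columns of $A$ is linearly independent, so for each index set $S$ with $|S|<spark(A)$ the submatrix $A_S$ has full column rank and $A_S^TA_S$ is positive definite; in particular $\sigma_S:=\lambda_{min}(A_S^TA_S)>0$. For any vector $x$ supported on $S$ we then have $\|Ax\|_2^2=x_S^T(A_S^TA_S)x_S\ge\sigma_S\|x_S\|_2^2=\sigma_S\|x\|_2^2$. Since $\{1,\dots,n\}$ has only finitely many subsets, the quantity $u^2:=\min_{|S|<spark(A)}\sigma_S$ is a minimum of finitely many strictly positive numbers, hence $u^2>0$. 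Applying the support-wise bound with $S=support(x)$ gives $u^2\|x\|_2^2\le\|Ax\|_2^2$ for every $x$ with $\|x\|_0<spark(A)$, and clearly $u^2\le w^2$.

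This leaves the eigenvalue sandwich, and I expect the inequality $\lambda_{min^+}(A)\le u^2$ to be the main obstacle. Positivity of $u^2$ is free, but comparing the restricted constant $u^2=\min_S\lambda_{min}(A_S^TA_S)$ against the smallest nonzero eigenvalue of the full matrix $A^TA$ is genuinely delicate: the eigenvector realizing $\lambda_{min^+}(A)$ lies in the row space of $A$ and generically has full support, so it is not an admissible competitor in the restricted minimization, and Cauchy interlacing only delivers $\sigma_S\ge\lambda_{min}(A^TA)=0$ (recall $A^TA$ is singular because $A$ is underdetermined). Consequently I would scrutinize this step most carefully --- checking whether some additional normalization on $A$, or a reduction to supports whose span meets the row space nontrivially, is implicitly required --- since it is exactly here that the clean two-sided chain $\lambda_{min^+}(A)\le u^2\le w^2\le\lambda_{max}(A)$ has to be reconciled with the restricted norm inequality.
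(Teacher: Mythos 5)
Your proof of the two-sided norm inequality is correct and reaches the paper's conclusion by a genuinely different and cleaner route. The paper establishes the lower bound by contradiction and compactness: it assumes the infimum of $\|Ax\|_2/\|x\|_2$ over admissible unit vectors is zero, extracts a convergent subsequence of near-minimizers, and argues the limit would be a nonzero null vector that is too sparse. Your observation that there are only finitely many supports $S$ with $|S|<spark(A)$, each giving a positive definite Gram matrix $A_S^TA_S$, yields $u^2=\min_S\lambda_{min}(A_S^TA_S)>0$ with no limiting argument at all, and your Rayleigh--Ritz bound $w^2=\lambda_{max}(A)$ coincides with the paper's upper estimate. So the functional statement $u^2\|x\|_2^2\le\|Ax\|_2^2\le w^2\|x\|_2^2$ with $0<u\le w$ and $w^2\le\lambda_{max}(A)$ is fully established by your argument.

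The step you flagged as the main obstacle, $\lambda_{min^+}(A)\le u^2$, is precisely where the paper's own proof fails, and your suspicion is justified: that inequality is false in general. The paper takes an extremal support $V$, lets $z$ be an eigenvector of $A_V^TA_V$ for its smallest eigenvalue $u^2$, extends $z$ by zeros to $x'$, and asserts $A^TAx'=u^2x'$; but the components of $A^TAx'$ outside $V$ equal $A_{V^c}^TA_Vz$ and have no reason to vanish, so $x'$ is not an eigenvector of $A^TA$. A concrete counterexample: let
\begin{equation*}
A=\begin{pmatrix}1&0&1\\0&1&1\end{pmatrix},
\end{equation*}
so $spark(A)=3$ and every $x$ with $\|x\|_0\le2$ is admissible. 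The nonzero eigenvalues of $A^TA$ are those of $AA^T=\left(\begin{smallmatrix}2&1\\1&2\end{smallmatrix}\right)$, namely $3$ and $1$, so $\lambda_{min^+}(A)=1$; yet for $S=\{1,3\}$ one has $A_S^TA_S=\left(\begin{smallmatrix}1&1\\1&2\end{smallmatrix}\right)$ with smallest eigenvalue $(3-\sqrt5)/2\approx0.382$, forcing $u^2\le(3-\sqrt5)/2<1=\lambda_{min^+}(A)$. Hence the sandwich $\lambda_{min^+}(A)\le u^2$ cannot be reconciled with the restricted norm inequality; the correct constant is your restricted quantity $\min_{|S|<spark(A)}\lambda_{min}(A_S^TA_S)$, not $\lambda_{min^+}(A)$. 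This is not a cosmetic issue for the paper, since the proof of Theorem 1 invokes $\|h_{S_0}+h_{S_1}\|_2^2\le\lambda_{min^+}(A)^{-1}\|A(h_{S_0}+h_{S_1})\|_2^2$, i.e.\ exactly the direction that fails.
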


\begin{proof}
The proof is divided into two steps.

Step 1: To prove the existence of $u$.

In order to prove this result we just need to prove that the set
\begin{eqnarray}
V=\{u:  \|Ax\|_2/\|x\|_2 \geq u,\  for\ any\ nonzero\ x \ with\ \|x\|_0 \leq spark(A) \}
\end{eqnarray}
has a nonzero infimum.

We assume that inf $V$=0, i.e., for any $n \in N^{+}$, there exists a vector $\|x_n\|_0 \leq spark(A)$ such that
\begin{eqnarray}
\|Ax_n\|_2/\|x_n\|_2 \leq n^{-1}
\end{eqnarray}
Without of generality, we can assume that $\|x_n\|_2=1$, so the bounded sequence $\{x_n\}$ has a subsequence $\{x_{n_i}\}$ which is convergent, i.e. $x_{n_i} \to x_0$ and it is obvious that $Ax_0=\bf{0}$ because that the function $y(x)=Ax$ is a continuous one.

Let $J(x_0)=\{i:(x_0)_i \ne 0\}$, since $x_{n_i} \to x_0$, it is easy to get that, for any $i \in J(x_0)$, there exists $N_i$ such that $(x_{n_k})_i \ne 0$ when $k \geq N_i$.

Let $ N=\max \limits_{i \in J(x_0)}N_i$, for any $i \in J(x_0)$, it is easy to get that $(x_{n_k})_i \ne 0$ when $k \geq N$. Therefore, we can get that $\|x_{n_k}\|_0 \geq \|x_0\|_0$ when $k \geq N$, such that $\|x_0\|_0 \leq spark(A)$.

Therefore, there exists a constant $u>0$ such that $\|Ax\|_2 \geq u\|x\|_2$, for any $x \in \mathbb{R}^n$ with $\|x\|_0 \leq spark(A)$.

Step 2: To prove $u^2 \geq \lambda_{min^{+}}(A^TA)$.

According to the proof above, there exists a vector $\widetilde x \in \mathbb{R}^n$ with $\|\widetilde x\|_0 \leq spark(A)$ such that $\|A\widetilde x\|_2=u\| \widetilde x\|_2$.

Let $V=support (\widetilde x)$, it is easy to get that
\begin{eqnarray}
u^2x^Tx \leq x^TA_V^TA_Vx,
\end{eqnarray}
for all $x \in \mathbb{R}^{|V|}$. Therefore, the smallest eigenvalue of $A_V^TA_V$  is $u^2$ since $A_V^TA_V \in R^{|V| \times |V|}$ is a symmetric matrix. and we can choose an eigenvector $z\in R^{|V|}$ of eigenvalue $u^2$.

If $u^2<\lambda_{min^{+}}(A^TA)$, then we can consider such a vector $x^{'}\in \mathbb{R}^n$ with $x_i^{'}=z_i$ when $i \in V$ and zero everywhere else. Therefore, it is easy to get that $A^TAx^{'}=u^2x^{'}$ which contradicts the definition of $\lambda_{min^{+}}(A^TA)$.

At last, we notice that $A^TA$ is a semi-positive definite matrix, such that $\|Ax\|_2^2=x^TA^TAx\leq \lambda_{max}(A^TA)\|x\|_2^2$ for all $x\in \mathbb{R}^n$.
Therefore, the proof is completed.
\end{proof}
The following lemma introduced by Foucart \cite{Foucart2012Sparse} is an important inequality to compare different norms between different subvectors.
\begin{lemma}{\rm \cite{Foucart2012Sparse}}\label{L2}
If $0<p<q$, and $u_1\geq \ldots \geq u_k\geq u_{k+1}\ldots \geq u_s \geq u_{s+1}\ldots \geq u_{k+t}\geq 0$, it holds that
\begin{eqnarray}
\displaystyle\left(\sum_{i=k+1}^{k+t}u_i^q\right)^{\frac{1}{q}}\leq C_{p,q}(k,s,t)\left(\sum_{i=1}^s u_i^p\right)^{\frac{1}{p}}
\end{eqnarray}
with $\displaystyle C_{p,q}(k,s,t)=max\left\{\frac{t^{\frac{p}{q}}}{s},\left(\frac{p}{q}\right)^{\frac{p}{q}}\left(1-\frac{p}{q}\right)^{1-\frac{p}{q}}k^{\frac{p}{q}-1}\right\}^{\frac{1}{p}}$
\end{lemma}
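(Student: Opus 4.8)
The plan is to reduce the statement to a scale-invariant optimization problem and then solve it by locating the extremal configuration. First I would substitute $v_i=u_i^q$ and write $r=p/q\in(0,1)$, so that $u_i^p=v_i^r$ and the non-increasing chain $u_1\ge\cdots\ge u_{k+t}\ge 0$ becomes a non-increasing chain $v_1\ge\cdots\ge v_{k+t}\ge 0$. Raising the desired inequality to the power $p$, the claim becomes
\begin{eqnarray}
\notag \left(\sum_{i=k+1}^{k+t}v_i\right)^{r}\le \max\left\{\frac{t^{r}}{s},\ r^{r}(1-r)^{1-r}k^{r-1}\right\}\sum_{i=1}^{s}v_i^{r}.
\end{eqnarray}
Both sides are homogeneous of degree $r$ in $v$, so I would normalize $\sum_{i=1}^{s}v_i^{r}=1$, after which it suffices to show that the tail sum $B=\sum_{i=k+1}^{k+t}v_i$ never exceeds $\big(\max\{t^{r}/s,\,r^{r}(1-r)^{1-r}k^{r-1}\}\big)^{1/r}$ over all admissible $v$.

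Second, I would maximize the linear functional $B$ subject to the single constraint $\sum_{i=1}^{s}v_i^{r}=1$ together with the monotonicity constraints. A short exchange argument using monotonicity pins down the shape of any maximizer: the indices $s+1,\dots,k+t$ occur only in $B$ and not in the constraint, so at the optimum they are pushed up to the largest value the ordering allows, namely $v_{s+1}=\cdots=v_{k+t}=v_s$; dually the indices $1,\dots,k$ occur only in the constraint and not in $B$, so they are pushed down to $v_1=\cdots=v_k=v_{k+1}$ to free budget. This leaves a low-dimensional problem in the overlap block $v_{k+1}\ge\cdots\ge v_s$.

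Third, on this reduced problem the Lagrange/KKT conditions force the overlap entries to be equal, so the only candidate maximizers are (a) the fully uniform profile $v_1=\cdots=v_{k+t}=c$, for which $B^{r}=t^{r}c^{r}$ and $\sum_{i=1}^{s}v_i^{r}=sc^{r}$ give the ratio $t^{r}/s$ exactly; and (b) a two-level concentrated profile in which optimizing the remaining free parameter reduces to maximizing a function of the form $x\mapsto x^{\,r}(1-x)^{\,1-r}$ on $[0,1]$, whose maximum is attained at $x=r$ with value precisely $r^{r}(1-r)^{1-r}$, producing the Stechkin factor $r^{r}(1-r)^{1-r}k^{r-1}$. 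Taking the larger of the two candidate values yields the stated constant.

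The main obstacle I expect is Step three: proving that these two profiles are the \emph{only} candidates and that no intermediate configuration beats them. Because $\sum_{i=1}^{s} v_i^{r}$ is concave for $r<1$, its level set is not convex, so maximizing the linear functional $B$ over it is not a routine convex program and the extremum may sit on a boundary cut out by the ordering constraints; the delicate part is therefore the exchange/monotonicity argument that saturates the right coordinates, followed by the single-variable optimization that recovers exactly the factor $r^{r}(1-r)^{1-r}$ rather than a weaker one. An alternative, more elementary route would bound each tail entry by $v_i\le(\sum_{j\le i}v_j^{r}/i)^{1/r}$ for $i\le s$ and by $v_i\le v_s$ for $i>s$, and then estimate the resulting sum by an integral; this avoids the variational language but still requires the same optimization to reach the sharp constant.
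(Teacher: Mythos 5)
First, a remark on the comparison itself: the paper does not prove Lemma~\ref{L2} at all — it is quoted from Foucart \cite{Foucart2012Sparse} and used as a black box — so there is no in-paper proof to measure you against. On your proposal: Steps 1 and 2 are sound. The substitution $v_i=u_i^q$, $r=p/q$, the homogeneity normalization, and the monotone exchange forcing $v_1=\cdots=v_k=v_{k+1}$ and $v_s=\cdots=v_{k+t}$ are all correct and genuinely reduce the problem.

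The gap is exactly where you suspect it, in Step 3, and it is structural rather than technical. (i) The set $\{\sum_{i=1}^{s}v_i^{r}=1,\ v_1\ge\cdots\ge v_{k+t}\ge 0\}$ is non-convex for $r<1$, so Lagrange stationary points need not be maxima. (ii) Stationarity does \emph{not} force the overlap block to be constant: after your exchange, $v_{k+1}$ carries multiplicity $k+1$ in the constraint while $v_s$ carries multiplicity $1+(k+t-s)$ in the objective, so an interior critical point satisfies $\mu r(k+1)v_{k+1}^{r-1}=1$, $\mu r v_i^{r-1}=1$ for $k+1<i<s$, and $\mu r v_s^{r-1}=1+(k+t-s)$ — a \emph{three}-level profile with $v_{k+1}$ strictly above and $v_s$ strictly below the middle block, which is neither of your two candidates. (iii) Even granting that the extremizers are the flat profiles $v_1=\cdots=v_N=c$, $v_{N+1}=\cdots=0$, the ratio there equals $\min(t,N-k)^{r}/\min(s,N)$, and one must optimize over the length $N$: the interior optimum $N^{*}=k/(1-r)$ reproduces $r^{r}(1-r)^{1-r}k^{r-1}$ and $N=k+t$ reproduces $t^{r}/s$, but $N^{*}$ need not be an admissible integer, so the Stechkin term is in general a strict upper bound, not an attained value. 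Consequently no argument of the form ``locate the maximizer and evaluate it'' can output that constant; you must prove an inequality valid for every profile. The way this is actually closed in the literature (see the Stechkin-type estimate, Proposition 2.3 of \cite{foucart2013mathematical}, which this paper already cites) is direct: normalize $\sum_{i=1}^{s}u_i^{p}=1$, set $\sigma=\sum_{i>k}u_i^{p}$, use $\sum_{i>k}u_i^{q}\le u_{k+1}^{q-p}\sum_{i>k}u_i^{p}$ together with $u_{k+1}^{p}\le(1-\sigma)/k$, and maximize the one-variable function $\sigma(1-\sigma)^{q/p-1}$ at $\sigma=p/q$; this is where $x^{r}(1-x)^{1-r}$ legitimately enters, as the optimum of a relaxed scalar parameter rather than as the value at an extremal configuration. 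Your ``alternative, more elementary route'' in the last sentence is essentially this argument, and it — not the variational one — is the path that terminates.
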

\begin{lemma}\label{L3}
For $p\in (0,1]$, we have that
\begin{eqnarray}
\displaystyle \left(\frac{p}{2}\right)^{\frac{1}{2}}\left(\frac{1}{2-p}\right)^{\frac{1}{2}-\frac{1}{p}}\geq \frac{\sqrt{2}}{2}
\end{eqnarray}

\end{lemma}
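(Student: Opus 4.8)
The plan is to take logarithms and reduce the claimed inequality to a single-variable monotonicity statement. Writing $f(p)=\left(\frac{p}{2}\right)^{1/2}\left(\frac{1}{2-p}\right)^{1/2-1/p}$ for the left-hand side and passing to the natural logarithm, the target inequality $f(p)\geq \frac{\sqrt2}{2}=2^{-1/2}$ becomes $\ln f(p)\geq -\frac12\ln 2$. Since $\ln\left(\frac p2\right)^{1/2}=\frac12\ln p-\frac12\ln 2$, the two $\ln 2$ contributions cancel, and the inequality is seen to be equivalent to
\begin{eqnarray}
\notag \psi(p):=\frac12\ln p+\left(\frac1p-\frac12\right)\ln(2-p)\geq 0,\qquad p\in(0,1].
\end{eqnarray}

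Next I would record the boundary value and the sign structure. A direct substitution gives $\psi(1)=\frac12\ln 1+\frac12\ln 1=0$, so the inequality holds with equality at the right endpoint; this is precisely where the constant $\frac{\sqrt2}{2}$ originates. It therefore suffices to show that $\psi$ is nonincreasing on $(0,1]$, which would force $\psi(p)\geq\psi(1)=0$ throughout the interval.

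The crux is the derivative computation, and it is here that the argument becomes pleasant. Differentiating and using the identity $\frac1p-\frac12=\frac{2-p}{2p}$, the cross term $\left(\frac1p-\frac12\right)\cdot\frac{-1}{2-p}$ collapses to $-\frac{1}{2p}$, which cancels exactly against the $\frac{1}{2p}$ coming from $\frac{d}{dp}\left(\frac12\ln p\right)$. What survives is simply
\begin{eqnarray}
\notag \psi'(p)=-\frac{\ln(2-p)}{p^2}.
\end{eqnarray}
For $p\in(0,1]$ we have $2-p\in[1,2)$, hence $\ln(2-p)\geq 0$ and $\psi'(p)\leq 0$, so $\psi$ is nonincreasing (in fact strictly decreasing on $(0,1)$). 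Combined with $\psi(1)=0$ this yields $\psi(p)\geq 0$ on $(0,1]$, which is exactly the desired inequality.

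I expect the only genuine obstacle to be spotting the cancellation $\frac1p-\frac12=\frac{2-p}{2p}$ that reduces $\psi'$ to the clean form above; without it the derivative appears as an unwieldy combination of $\frac{1}{2p}$, $\frac{\ln(2-p)}{p^2}$ and $\frac{1}{2-p}\left(\frac1p-\frac12\right)$, and the monotonicity is far less transparent. As a consistency check I would also verify the limiting behaviour $\psi(p)\to+\infty$ as $p\to0^+$, where the $\frac1p\ln(2-p)$ term dominates $\frac12\ln p$; this agrees with $\psi\geq0$ and rules out any sign change near the left endpoint.
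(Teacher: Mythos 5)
Your proposal is correct and follows essentially the same route as the paper: take logarithms, differentiate, exploit the cancellation that reduces the logarithmic derivative to $-\ln(2-p)/p^2\leq 0$, and conclude from monotonicity together with the value at $p=1$. The only difference is cosmetic (you move the constant $-\tfrac12\ln 2$ to the other side before differentiating), and you correctly state that the function is nonincreasing, whereas the paper's text says ``nondecreasing'' despite having shown the derivative is nonpositive --- a slip that does not affect its conclusion $f(p)\geq f(1)$.
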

\begin{proof}
We denote a function $f(p)$ on interval $(0,1]$
\begin{eqnarray}
f(p)=\displaystyle \left(\frac{p}{2}\right)^{\frac{1}{2}}\left(\frac{1}{2-p}\right)^{\frac{1}{2}-\frac{1}{p}},
\end{eqnarray}
and we can get that
\begin{eqnarray}
\displaystyle\ln f(p)=\frac{1}{2}\ln\frac{p}{2}-\left(\frac{1}{2}-\frac{1}{p}\right)\ln(2-p).
\end{eqnarray}
It is easy to get that
\begin{eqnarray}
\notag h(p)&=&\frac{f'(p)}{f(p)}=\frac{1}{2p}-\left(\frac{1}{p^2}\ln(2-p)-\left(\frac{1}{2}-\frac{1}{p}\right)\frac{1}{2-p}\right),\\
&=&-\frac{1}{p^2}\ln(2-p)\leq 0.
\end{eqnarray}
Therefore, $f(p)$ is nondecreasing in $p \in (0,1]$, and we can get $f(p)\geq f(1)=\frac{\sqrt{2}}{2}$.

The proof is completed.
\end{proof}
As an important matrix used in the practical application, there are a lot of properties of Vandermonde matrices, and the following lemma presents an important property which is widely used in this paper.
\begin{lemma}{\rm (Theorem A.25 in  \cite{foucart2013mathematical})}\label{L4}
If $\lambda_1>\lambda_2>...\lambda_n>0$, then the Vandermonde matrix $A(m,n,\lambda)$ is totally positive, i.e, for any sets $I, J\subset\{1,2,3,...,n\}$ of equal size,
\begin{eqnarray}
det A(m,n,\lambda)_{I,J}>0,
\end{eqnarray}
where $A(m,n,\lambda)_{I,J}$ is the submatrix of $A(m,n,\lambda)$ with rows and columns indexed by $I$ and $J$.
\end{lemma}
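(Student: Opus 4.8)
The plan is to identify $A(m,n,\lambda)_{I,J}$ as a \emph{generalized} Vandermonde matrix and to factor its determinant into a Schur polynomial times an ordinary Vandermonde product, each of whose signs is transparent. Write $I=\{i_1<\cdots<i_k\}$ for the chosen rows and $J=\{j_1<\cdots<j_k\}$ for the chosen columns; the definition of the matrix gives the $(a,b)$ entry of the submatrix as $\lambda_{j_b}^{\,i_a-1}$. The exponents $\beta_a:=i_a-1$ satisfy $0\le\beta_1<\cdots<\beta_k$ but need not be consecutive, which is exactly what separates the submatrix from a textbook Vandermonde matrix.

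First I would treat the case of consecutive rows, $i_a=a$, where $\beta_a=a-1$ and the submatrix is an ordinary Vandermonde matrix in the nodes $\lambda_{j_1},\dots,\lambda_{j_k}$. Its determinant is the classical product $\prod_{1\le b<b'\le k}(\lambda_{j_{b'}}-\lambda_{j_b})$. This already exhibits the one genuinely delicate point of the whole lemma: the product is positive precisely when the selected nodes increase, i.e. $\lambda_{j_1}<\cdots<\lambda_{j_k}$, so total positivity in the form $\det>0$ requires the nodes to be taken in \emph{increasing} order $0<\lambda_1<\cdots<\lambda_n$ (with the decreasing order written in the statement each minor acquires the definite sign $(-1)^{k(k-1)/2}$ and is in particular never zero). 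I would fix the increasing convention from here on.

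For general, non-consecutive exponents the key step is the bialternant (Jacobi) identity. Setting $\mu_a=\beta_{k-a+1}-(k-a)$ produces a partition $\mu_1\ge\cdots\ge\mu_k\ge0$, and one has
\[
\det\bigl(\lambda_{j_b}^{\,\beta_a}\bigr)_{a,b=1}^{k}=s_\mu(\lambda_{j_1},\dots,\lambda_{j_k})\cdot\prod_{1\le b<b'\le k}(\lambda_{j_{b'}}-\lambda_{j_b}),
\]
where $s_\mu$ is the Schur polynomial indexed by $\mu$. The second factor is the consecutive-case determinant already handled, so the whole problem collapses to proving $s_\mu>0$ at positive arguments. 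This positivity is the heart of the matter and the step I expect to be the main obstacle: I would obtain it from the combinatorial formula expressing $s_\mu$ as a sum, over semistandard Young tableaux of shape $\mu$ with entries in $\{1,\dots,k\}$, of monomials in $\lambda_{j_1},\dots,\lambda_{j_k}$ with coefficient $1$; since the shape admits at least one such tableau, $s_\mu$ is a nonempty sum of positive monomials and hence strictly positive whenever all $\lambda_{j_b}>0$.

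Combining the two positive factors gives $\det A(m,n,\lambda)_{I,J}>0$ for every pair $I,J$ of equal size, which is the claim. An alternative to the symmetric-function machinery, should one wish to keep the argument self-contained, is to prove non-vanishing directly: a vanishing determinant would force a nonzero generalized polynomial $f(t)=\sum_a c_a t^{\beta_a}$ to have the $k$ distinct positive roots $\lambda_{j_1},\dots,\lambda_{j_k}$, contradicting the bound of at most $k-1$ positive roots for a $k$-term generalized polynomial (Rolle's theorem applied after repeatedly dividing out powers of $t$); the sign is then constant on the connected region of increasing positive nodes and is pinned to be positive by the consecutive case. Either way, the only real care is the ordering convention flagged above.
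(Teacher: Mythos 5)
Your proof is correct, but note that the paper itself offers no proof of this lemma at all: it is imported verbatim as Theorem~A.25 of the cited Foucart--Rauhut monograph, and the authors only use the consequence (Corollary~\ref{C1}) that every square submatrix is invertible. Your argument therefore supplies something the paper does not. The route you take --- recognize the minor as a generalized Vandermonde determinant, peel off the ordinary Vandermonde factor via the bialternant identity, and reduce positivity to the strict positivity of a Schur polynomial at positive arguments (with the semistandard-tableau expansion, or alternatively the Rolle/Descartes root-count argument for non-vanishing plus a connectedness argument for the sign) --- is the standard textbook proof and is complete; the indexing $\mu_a=\beta_{k-a+1}-(k-a)$ is the right normalization and the super-standard filling guarantees the tableau sum is nonempty. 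You are also right to flag the ordering: with the powers $\lambda_j^0,\dots,\lambda_j^{m-1}$ increasing down the rows as in the paper's definition \eqref{Van Matrix}, the hypothesis as written ($\lambda_1>\cdots>\lambda_n>0$, i.e.\ decreasing nodes) makes each $k\times k$ minor carry the sign $(-1)^{k(k-1)/2}$ rather than being positive, so the statement is literally true only after reversing to $0<\lambda_1<\cdots<\lambda_n$ (or reversing the column order). This slip is harmless for the paper, since Corollary~\ref{C1} and Proposition~\ref{P1} only need the minors to be nonzero, which your argument establishes under either ordering, but it is a real imprecision in the statement and your proposal is the only place it gets addressed.
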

\begin{corollary}\label{C1}
If $|\lambda_i|\neq |\lambda_j|$ as long as $i\neq j$ and $|\lambda_i|\neq 0$ $(i,j\in\{1,2,3,...,n\})$, the submatrix $A(m,n,\lambda)_{I,J}$ is an an invertible matrix for any sets $I, J\subset\{1,2,3,...,n\}$ of equal size.
\end{corollary}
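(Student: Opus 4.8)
The plan is to deduce Corollary~\ref{C1} from the total positivity of Lemma~\ref{L4}. First I would fix notation for the minor: writing $I=\{i_1<\cdots<i_k\}$ for the chosen rows and $J=\{j_1,\dots,j_k\}$ for the chosen columns, the submatrix $A(m,n,\lambda)_{I,J}$ has $(r,l)$-entry $\lambda_{j_l}^{\,e_r}$ with $e_r=i_r-1$, so it is a generalized Vandermonde matrix determined by the $k$ distinct exponents $e_1<\cdots<e_k$ and the $k$ nodes $\lambda_{j_1},\dots,\lambda_{j_k}$. Its invertibility is equivalent to the assertion that no nonzero polynomial $P(t)=\sum_{r=1}^{k}c_r t^{e_r}$ supported on these exponents can vanish simultaneously at all $k$ nodes.

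Next I would try to reduce to the positive case covered by Lemma~\ref{L4}. Permuting the rows or the columns only multiplies the determinant by $\pm1$, so I may reorder freely. The hypotheses $|\lambda_i|\neq|\lambda_j|$ for $i\neq j$ and $|\lambda_i|\neq0$ guarantee that the numbers $a_l:=|\lambda_{j_l}|$ are distinct and strictly positive. Applying Lemma~\ref{L4} to the all-positive Vandermonde matrix built from the $a_l$ (arranged in decreasing order) shows that the corresponding \emph{unsigned} minor, with entries $a_l^{\,e_r}$, has strictly positive determinant and is therefore nonsingular.

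The hard part is transferring nonsingularity from the unsigned nodes $a_l$ back to the signed nodes $\lambda_{j_l}=\pm a_l$, and I expect this sign bookkeeping to be the main obstacle. The difficulty is structural: the sign of a node enters the entry $\lambda_{j_l}^{\,e_r}$ only through the parity of $e_r$, being erased on even-exponent rows and retained on odd-exponent rows. Hence the signs cannot be pulled out by any single diagonal row or column scaling — such a scaling merely transports them between the even and the odd rows — and $\det A(m,n,\lambda)_{I,J}$ is a genuinely different signed sum from its unsigned counterpart. Thus Lemma~\ref{L4} does not by itself close the argument once some $\lambda_{j_l}<0$.

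Before committing to the general statement I would sanity-check it on small mixed-sign examples, and here a quick computation already exposes the obstacle as essential: for the exponents $e=\{0,1,3\}$ (i.e.\ $I=\{1,2,4\}$) and the nodes $(1,2,-3)$, which have the distinct absolute values $1,2,3$, the polynomial $P(t)=t^3-7t+6=(t-1)(t-2)(t+3)$ vanishes at all three nodes, so the resulting $3\times3$ minor is singular. This shows the reduction cannot go through unconditionally for arbitrary $I$, and that the parity pattern of the exponents $e_r$ governs everything. I would therefore either restrict $I$ to the cases actually used later — for instance the leading rows $I=\{1,\dots,k\}$, where $e_r=r-1$ are consecutive and the minor is an ordinary Vandermonde matrix whose nonsingularity is immediate from the distinctness of the nodes — or replace the total-positivity route by a careful fewnomial/Descartes count of the real roots of $P$, taking pains to exploit the distinct-absolute-value hypothesis, since the naive bound of $k-1$ positive roots inflates to $2(k-1)$ real roots once negative roots are also admitted.
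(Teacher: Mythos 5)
Your analysis is correct, and you have in fact done more than the paper does: Corollary~\ref{C1} is stated with no proof at all, offered as an immediate consequence of Lemma~\ref{L4}, and your work shows that no proof can exist because the statement is false as written. Lemma~\ref{L4} requires $\lambda_1>\cdots>\lambda_n>0$; the corollary's hypothesis controls only the absolute values, and, as you observe, the signs cannot be removed by any diagonal row or column scaling because they survive on odd-exponent rows and are erased on even-exponent rows. Your counterexample checks out: with row set $I=\{1,2,4\}$ (exponents $0,1,3$) and nodes $1,2,-3$, which have distinct nonzero absolute values,
\[
\det\begin{pmatrix}1 & 1 & 1\\ 1 & 2 & -3\\ 1 & 8 & -27\end{pmatrix}=-30+24+6=0,
\]
equivalently $(t-1)(t-2)(t+3)=t^{3}-7t+6$ has no $t^{2}$ term and vanishes at all three nodes. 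Any $A(m,n,\lambda)$ with $m\geq 4$ whose node list contains $1,2,-3$ therefore violates the corollary, so the statement can only be salvaged by strengthening the hypothesis to that of Lemma~\ref{L4} (all $\lambda_i$ positive, or all of one sign) or by restricting the admissible row sets $I$.

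One caveat on your proposed repair: restricting $I$ to an initial segment of rows (so the minor is an ordinary Vandermonde matrix in distinct nodes) is sound but does not cover the paper's actual use of the corollary. In the proof of Proposition~\ref{P1} it is invoked for row sets consisting of $\{1,\dots,m\}$ together with an arbitrary subset of the rows $B_{i_1},B_{i_2},\dots$ carrying exponents from $\{m,\dots,2m+1\}$, i.e.\ precisely for non-consecutive exponent sets; taking $m=2$ and exponents $\{0,1\}\cup\{3\}$ puts your counterexample squarely inside that regime, so the gap propagates to Proposition~\ref{P1} and to Theorems~2 and~3, which rely on it. Your scepticism about the Descartes/fewnomial route is also justified: a $k$-term polynomial admits up to $2(k-1)$ nonzero real roots once negative roots are allowed, and your example realizes $3$ roots with $k=3$ terms, so distinctness of absolute values alone cannot force nonsingularity.
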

\section{Main Contribution}

For a given Vandermonde matrix $A(m,n,\lambda)\in \mathbb R^{m \times n}$, $x^*$ is the sparest solution of $A(m,n,\lambda)x=b$, and it is obvious that $1\leq\|x^*\|_0\leq m$ since $rank(A(m,n,\lambda))=m$. Therefore, the question is: what is the $p$ such that $\|x^*\|_p^p<\|x^*+h\|_p^p$ for every nonzero vector $h\in N(A(m,n,\lambda))$?

In order to answer this question completely, we consider the following two cases in this paper,

Case \uppercase\expandafter{\romannumeral1}, $1\leq\|x^*\|_0<\frac{spark(A(m,n,\lambda))}{2}=\frac{m+1}{2}$.

Case \uppercase\expandafter{\romannumeral2}, $\frac{spark(A(m,n,\lambda))}{2}=\frac{m+1}{2}\leq\|x^*\|_0\leq m$.

In Section 3.1, we will consider Case \uppercase\expandafter{\romannumeral1}, and we will give a result with a wider applicability which not only can be applied to an ordinary underdetermined matrix. In Section 3.2, we will consider Case \uppercase\expandafter{\romannumeral2} for a Vandermonde matrix. In order to describe our results more clearly, we define a matrix function for a given underdetermined matrix $A$,
\begin{eqnarray}\label{Function}
p^*(A):=min\left\{1,\frac{16\lambda_{min^+}(A)^2}{(\sqrt 2+1)^2(\lambda_{max}(A)-\lambda_{min^+}(A))^2}\right\}.
\end{eqnarray}
\subsection{Case \uppercase\expandafter{\romannumeral1}}
In this subsection, we consider a more general sense, i.e., the matrix $A$ is an arbitrary underdetermined matrix and $\|\breve{x}\|_0<\frac{spark(A)}{2}$ is the solution of $l_0$-minimization. The following theorem presents us such a $p$ for this situation.
\begin{theorem}
Given an underdetermined matrix $A\in \mathbb{R}^{m \times n}$ with $m \leq n$, and $\breve{x}$ is the solution of the following $l_0$-minimization,
\begin{eqnarray}
\mathop{\min}\limits_{x \in \mathbb{R}^n} \|x\|_0\ s.t. \ Ax=b
\end{eqnarray}
If $\|\breve{x}\|_0=k< \frac{spark(A)}{2}$, then we have that
\begin{eqnarray}
\|\breve{x}\|_p^p<\|\breve{x}+h\|_p^p
\end{eqnarray}
for any nonzero vector $h\in N(A)$ and $0<p<p(A)$, where the matrix function $p^*(A)$ is defined by (\ref{Function}).
\end{theorem}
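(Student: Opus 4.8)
The plan is to prove the statement by establishing the strict null space property for $l_p$ and then optimizing over $p$. Write $S=support(\breve{x})$, so $|S|=k$. For any $h\in N(A)$ I would split the objective as $\|\breve{x}+h\|_p^p=\sum_{i\in S}|\breve{x}_i+h_i|^p+\|h_{S^c}\|_p^p$ and apply the elementary inequality $|a+b|^p\ge|a|^p-|b|^p$ (valid for $0<p\le1$) to the first sum, obtaining $\|\breve{x}+h\|_p^p\ge\|\breve{x}\|_p^p-\|h_S\|_p^p+\|h_{S^c}\|_p^p$. Hence it suffices to prove $\|h_S\|_p^p<\|h_{S^c}\|_p^p$ for every nonzero $h\in N(A)$. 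Since replacing $S$ by the index set $T_0$ of the $k$ largest-in-modulus entries of $h$ can only increase the left side and decrease the right side, I may assume $S=T_0$, sort $|h|$ decreasingly, and partition the complement $T_0^c$ into consecutive blocks $T_1,T_2,\ldots$ of size $k$ in order of decreasing magnitude. The goal becomes $\|h_{T_0}\|_p^p<\|h_{T_0^c}\|_p^p$.

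The engine of the argument is a restricted-orthogonality estimate extracted from Lemma \ref{L1}. Because $|T_0\cup T_j|\le 2k<spark(A)$, the two-sided bound $u^2\|x\|_2^2\le\|Ax\|_2^2\le w^2\|x\|_2^2$ holds on these $2k$-sparse vectors, and the parallelogram identity converts it into $|\langle Ah_{T_0},Ah_{T_j}\rangle|\le\frac{w^2-u^2}{2}\|h_{T_0}\|_2\|h_{T_j}\|_2$ for the disjoint supports $T_0,T_j$. Since $h\in N(A)$ gives $Ah_{T_0}=-\sum_{j\ge1}Ah_{T_j}$, taking the inner product with $Ah_{T_0}$ and using $\|Ah_{T_0}\|_2^2\ge u^2\|h_{T_0}\|_2^2$ yields
\[
u^2\|h_{T_0}\|_2\le\frac{w^2-u^2}{2}\sum_{j\ge1}\|h_{T_j}\|_2 .
\]
This is the step through which $\lambda_{max}(A)-\lambda_{min^+}(A)$ (an upper bound for $w^2-u^2$) and $\lambda_{min^+}(A)$ (a lower bound for $u^2$) will enter the final threshold.

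The remaining and most delicate task is to bound $\sum_{j\ge1}\|h_{T_j}\|_2$ by a genuinely $p$-decaying multiple of $\|h_{T_0^c}\|_p$. Here I would invoke Lemma \ref{L2} with $q=2$ blockwise: each $\|h_{T_j}\|_2$ is an $l_2$ norm over a block dominated by the higher-ranked entries, so Lemma \ref{L2} controls it by $C_{p,2}(\cdot)$ times an $l_p$ norm of those entries. Choosing the parameters $(k,s,t)$ so that the \emph{second} term of $C_{p,2}$ governs produces the crucial factor $\left(\frac{p}{2}\right)^{1/2}\left(1-\frac{p}{2}\right)^{1/p-1/2}k^{1/2-1/p}$; its $k^{1/2-1/p}$ exactly cancels the loss $\|h_{T_0}\|_p\le k^{1/p-1/2}\|h_{T_0}\|_2$ incurred when passing from $\|h_{T_0}\|_2$ back to $\|h_{T_0}\|_p$, while the residual $\left(\frac{p}{2}\right)^{1/2}$ supplies the decay $\sqrt{p}\to 0$. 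Lemma \ref{L3} is then used to bound this constant from below by $\frac{\sqrt{2}}{2}$ per block, so that summing the geometric contribution over $j$ yields the factor $\sqrt{2}+1$. Combining these estimates gives an inequality of the shape $\|h_{T_0}\|_p\le\frac{w^2-u^2}{2u^2}\cdot\frac{\sqrt{2}+1}{2}\sqrt{p}\,\|h_{T_0^c}\|_p$, and forcing the prefactor below $1$, together with $u^2\ge\lambda_{min^+}(A)$ and $w^2-u^2\le\lambda_{max}(A)-\lambda_{min^+}(A)$, is precisely $0<p<p^*(A)$ as defined in (\ref{Function}).

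The main obstacle is this last tail estimate: selecting the parameters in Lemma \ref{L2} so that the decaying second term of $C_{p,2}$ dominates, and tracking the constant through the geometric summation so that the $k$-dependent factors cancel cleanly rather than blowing up. Once this is in place, the restricted-orthogonality computation and the two reduction steps are routine, and the closed form of $p^*(A)$ follows by substituting the spectral bounds of Lemma \ref{L1}.
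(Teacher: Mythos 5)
Your overall strategy is the paper's: reduce to the null space property $\|h_S\|_p^p<\|h_{S^c}\|_p^p$, derive the restricted-orthogonality bound $|\langle Ax_1,Ax_2\rangle|\le\frac{w^2-u^2}{2}\|x_1\|_2\|x_2\|_2$ from Lemma \ref{L1} and the parallelogram identity, control the tail with Lemma \ref{L2} (resolving the maximum in $C_{p,2}$ via Lemma \ref{L3}), and return to the $p$-norm by H\"older; the target constant you write down is also the paper's. But there is a genuine gap at the pivotal step. You test the null space identity against $Ah_{T_0}$ alone and obtain $u^2\|h_{T_0}\|_2\le\frac{w^2-u^2}{2}\sum_{j\ge1}\|h_{T_j}\|_2$, so your tail sum starts at $j=1$. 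The block $T_1$ consists of the \emph{largest} entries of $h_{T_0^c}$, and no application of Lemma \ref{L2} can bound $\|h_{T_1}\|_2$ by a $\sqrt{p}$-decaying multiple of an $l_p$ norm supported in $T_0^c$: the sorted-domination hypothesis forces the dominating $l_p$ block for $T_1$ to be $T_0$ itself, which makes the estimate circular (you end up with $\|h_{T_0}\|_p\le D\left(\|h_{T_0}\|_p+\|h_{T_0^c}\|_p\right)$ and hence a different threshold than (\ref{Function})), while the crude alternative $\|h_{T_1}\|_2\le\|h_{T_1}\|_p\le\|h_{T_0^c}\|_p$ carries no factor $k^{\frac{1}{2}-\frac{1}{p}}$, so after the H\"older step $\|h_{T_0}\|_p\le k^{\frac{1}{p}-\frac{1}{2}}\|h_{T_0}\|_2$ that term blows up as $p\to0$ instead of decaying.

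The paper avoids exactly this by testing against $A(h_{S_0}+h_{S_1})$, i.e.\ keeping the top block of $h$ outside the support on the left-hand side: one gets $\|h_{S_0}\|_2^2+\|h_{S_1}\|_2^2\le B\left(\|h_{S_0}\|_2+\|h_{S_1}\|_2\right)$ with a tail sum starting at $i=2$, whose shifted $l_p$ bounds land entirely inside $S_0^c$ without circularity. The factor $\sqrt{2}+1$ in $p^*(A)$ then comes from the elementary geometric fact that $a^2+b^2\le B(a+b)$ confines $(a,b)$ to a disk of radius $B/\sqrt{2}$ about $(B/2,B/2)$ and hence forces $a\le\frac{\sqrt{2}+1}{2}B$ --- not, as you assert, from ``summing the geometric contribution over $j$.'' The fix is local: replace your single-block inner product by this two-block version, after which the rest of your outline (the four-block grouping fed into Lemma \ref{L2} with $q=2$, $s=t=4k$, Lemma \ref{L3}, and H\"older) goes through and yields the stated $p^*(A)$.
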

\begin{proof}
For a underdetermined matrix $A\in \mathbb{R}^{m \times n}$ with $m \leq n$, $\breve{x}$ is the solution of the $l_0$-minimization and $\|\breve{x}\|_0=k<\frac{spark(A)}{2}$.

Now, for a vector $h\in N(A)$, we consider the index set $S_0=support(\breve{x}).$

$S_1$=\{ indices of the largest $k$ values component of $h$ except $S_0$\}.

$S_2$=\{ indices of the largest $k$ values component of $h$ except $S_0$ and $S_1$\}.

\dots

$S_t$=\{ indices of the rest components of $h$ \}.

Before we start our main proof, we present a simple inequality which is useful in our proof.

For any vectors $x_1$ and $x_2$, $\|x_i\|_0 < \frac{spark(A)}{2}\ (i=1,2)$ and $support(x_1) \cap support(x_2)= \varnothing$, then we have that
\begin{eqnarray}\label{BU}
\displaystyle|\langle Ax_1,Ax_2 \rangle | \leq \frac{\lambda_{max}(A)-\lambda_{min^+}(A)}{2}\|x_1\|_2\|x_2\|_2.
\end{eqnarray}

By Lemma \ref{L1}, it is easy to get that
\begin{eqnarray}
\notag\frac{|\langle Ax_1,Ax_2 \rangle|}{\|x_1\|_2\|x_2\|_2}&=&\displaystyle\left|\left\langle A\left(\frac{x_1}{\|x_1\|_2}\right), A\left(\frac{x_2}{\|x_2\|_2}\right)\right\rangle\right|,\\
\notag &=&\frac{1}{4}\left|\left\|A\left(\frac{x_1}{\|x_1\|_2}+\frac{x_2}{\|x_2\|_2}\right)\right\|_2^2-\left\|A\left(\frac{x_1}{\|x_1\|_2}-\frac{x_2}{\|x_2\|_2}\right)\right\|_2^2\right|,\\
&\leq&\frac{1}{4}\left|w^2\left\| \frac{x_1}{\|x_1\|_2}+\frac{x_2}{\|x_2\|_2}\right\|_2^2-u^2\left\| \frac{x_1}{\|x_1\|_2}-\frac{x_2}{\|x_2\|_2}\right\|_2^2 \right|.
\end{eqnarray}

Since $support(x_1) \cap support(x_2)= \varnothing$, we have that
\begin{eqnarray}
\left\|\frac{x_1}{\|x_1\|_2}+\frac{x_2}{\|x_2\|_2}\right\|_2^2=\left\|\frac{x_1}{\|x_1\|_2}-\frac{x_2}{\|x_2\|_2}\right\|_2^2=2,
\end{eqnarray}
from which we get that
\begin{eqnarray}
\displaystyle|\langle Ax_1,Ax_2 \rangle | \leq \frac{w^2-u^2}{2}\|x_1\|_2\|x_2\|_2.
\end{eqnarray}
Furthermore, we can get that
\begin{eqnarray}
\displaystyle|\langle Ax_1,Ax_2 \rangle | \leq \frac{\lambda_{max}(A)-\lambda_{min^+}(A)}{2}\|x_1\|_2\|x_2\|_2.
\end{eqnarray}

By Lemma \ref{L1} and the inequality (\ref{BU}) it is obvious that
\begin{eqnarray}\label{E1}
\notag \|h_{S_0}\|_2^2+\|h_{S_1}\|_2^2 &=& \|h_{S_0}+h_{S_1}\|_2^2 \\
\notag &\leq & \frac{1}{\lambda_{min^+}(A)}\|Ah_{S_0}+Ah_{S_1}\|_2^2\\
\notag &\leq & \frac{1}{\lambda_{min^+}(A)}\left(\sum_{i=2}^t \langle -Ah_{S_0},Ah_{S_i}\rangle+\sum_{i=2}^t \langle -Ah_{S_1},Ah_{S_i}\rangle\right)\\
&\leq & \frac{\lambda_{max}(A)-\lambda_{min^+}(A)}{2\lambda_{min^+}(A)}(\|h_{S_0}\|_2+\|h_{S_1}\|_2)\left(\sum_{i=2}^t \|h_{S_i}\|_2\right).
\end{eqnarray}
For any $2\leq i\leq t$, it is easy to get that
\begin{eqnarray}
\|h_{S_i}\|_2+\|h_{S_{i+1}}\|_2+\|h_{S_{i+2}}\|_2+\|h_{S_{i+3}}\|_2 \leq 2\|h_{S_i\bigcup S_{i+1}\bigcup S_{i+2} \bigcup S_{i+3}}\|_2.
\end{eqnarray}
Therefore,
\begin{eqnarray}
\sum_{i\geq2}\|h_{S_i}\|_2\leq 2 \sum_{i\geq0}\left(\|h_{S_{4i+2}\bigcup S_{4i+3}\bigcup S_{4i+4} \bigcup S_{4i+5}}\|_2\right)
\end{eqnarray}

By Lemma \ref{L3}, Since we can get that
\begin{eqnarray}
\displaystyle \left(\frac{p}{2}\right)^{\frac{1}{2}}\left(\frac{1}{2-p}\right)^{\frac{1}{2}-\frac{1}{p}}\geq \frac{\sqrt{2}}{2},
\end{eqnarray}
Therefore, we have that
\begin{eqnarray}
\displaystyle \left(\frac{p}{2}\right)^{\frac{1}{2}}\left(\frac{1}{2-p}\right)^{\frac{1}{2}-\frac{1}{p}}\geq 2^{\frac{1}{2}-\frac{1}{p}},
\end{eqnarray}
By Lemma \ref{L2}, we take $q=2$, $s=t=4k$, and $0<p\leq1$. It is easy to get that
\begin{eqnarray}
\|h_{S_i\bigcup S_{i+1}\bigcup S_{i+2} \bigcup S_{i+3}}\|_2\leq C(p) \|h_{S_{i-1}\bigcup S_{i}\bigcup S_{i+1} \bigcup S_{i+2}}\|_p,
\end{eqnarray}
where
\begin{eqnarray}
\notag C(p)&=&max\{(4k)^{\frac{1}{2}-\frac{1}{p}},\left(\frac{p}{2}\right)^{\frac{1}{2}}\left(2-p\right)^{\frac{1}{p}-\frac{1}{2}}(2k)^{\frac{1}{2}-\frac{1}{p}}\}\\
&=&\left(\frac{p}{2}\right)^{\frac{1}{2}}\left(2-p\right)^{\frac{1}{p}-\frac{1}{2}}(2k)^{\frac{1}{2}-\frac{1}{p}}.
\end{eqnarray}

Therefore, we can get that
\begin{eqnarray}\label{E2}
\notag \sum_{i=2}^{t}\|h_{S_i}\|_2 &\leq& 2 \sum_{i\geq 0}(\|h_{S_{4i+2}\bigcup S_{4i+3}\bigcup S_{4i+4} \bigcup S_{4i+5}}\|_2)\\
\notag &\leq & 2 \cdot \left(\frac{p}{2}\right)^{\frac{1}{2}}\left(2-p\right)^{\frac{1}{p}-\frac{1}{2}}(2k)^{\frac{1}{2}-\frac{1}{p}}\sum_{i\geq 0}(\|h_{S_{4i+1}\bigcup S_{4i+2}\bigcup S_{4i+3} \bigcup S_{4i+4}}\|_p)\\
&\leq & 2 \cdot \left(\frac{p}{2}\right)^{\frac{1}{2}}\left(2-p\right)^{\frac{1}{2}-\frac{1}{p}}(2k)^{\frac{1}{p}-\frac{1}{2}}\|h_{S_0^C}\|_p.
\end{eqnarray}
Substituting the inequality (\ref{E2}) into (\ref{E1}), we have that
\begin{eqnarray}\label{E3}
\notag \|h_{S_0}\|_2^2+\|h_{S_1}\|_2^2 &\leq& \frac{\lambda_{max}(A)-\lambda_{min^+}(A)}{2\lambda_{min^+}(A)}(\|h_{S_0}\|_2+\|h_{S_1}\|_2)\left(\sum_{i=2}^t \|h_{S_i}\|_2\right)\\
&\leq & B\cdot(\|h_{S_0}\|_2+\|h_{S_1}\|_2),
\end{eqnarray}
where $B=\frac{\lambda_{max}(A)-\lambda_{min^+}(A)}{\lambda_{min^+}(A)}\left(\frac{p}{2}\right)^{\frac{1}{2}}\left(2-p\right)^{\frac{1}{p}-\frac{1}{2}}(2k)^{\frac{1}{2}-\frac{1}{p}}\|h_{S_0^C}\|_p$.

By (\ref{E3}), we can get that
\begin{eqnarray}
(\|h_{S_0}\|_2-\frac{B}{2})^2+(\|h_{S_1}\|_2-\frac{B}{2})^2\leq \frac{B^2}{2}
\end{eqnarray}
Therefore, we can get that
\begin{eqnarray}
\|h_{S_0}\|_2\leq \frac{\sqrt 2+1}{2} B.
\end{eqnarray}
By H\"older's inequality, for any vector $x$, we can get that
\begin{eqnarray}
\notag\|x\|_p^p\leq\|x\|_0^{1-\frac{p}{2}}\left(\|x\|_2^2\right)^{\frac{p}{2}}=\|x\|_0^{1-\frac{p}{2}} \|x\|_2^p.
\end{eqnarray}
Therefore, we can get that
\begin{eqnarray}
\|h_{S_0}\|_p\leq k^{\frac{1}{p}-\frac{1}{2}}\|h_{S_0}\|_2\leq k^{\frac{1}{p}-\frac{1}{2}}\cdot\frac{\sqrt 2+1}{2} B.
\end{eqnarray}

We define a function $\varphi (p)$ on interval $(0,1]$ by $\varphi (p)=\left(1-\frac{p}{2}\right)^{\frac{1}{p}-\frac{1}{2}}$, and it is easy to get that
\begin{eqnarray}
\notag \frac{\varphi '(p)}{\varphi (p)}&=&-\frac{1}{p^2}\ln (1-\frac{p}{2})+(\frac{1}{p}-\frac{1}{2})\frac{-1}{2-p}\\
\notag &=&\frac{1}{p}\left(\frac{1}{p}\ln\frac{2}{2-p}-\frac{1}{2}\right)\\
\notag &=&\frac{1}{p}\left(\frac{1}{p}\ln\left(1+\frac{p}{2-p}\right)-\frac{1}{2}\right)\\
&>&\frac{1}{p}\left(\frac{1}{p}\cdot\frac{p}{2-p}-\frac{1}{2}\right)>0,
\end{eqnarray}
Because $\lim \limits_{p\rightarrow 0} \varphi (p)=e^{-\frac{1}{2}}$ and $\varphi (1)=\frac{\sqrt{2}}{2}$ . Therefore, we can get that $\varphi (p)\leq \frac{\sqrt{2}}{2}$ for $0<p\leq 1$.

We consider the following inequality,
\begin{eqnarray}\label{E4}
\frac{\sqrt 2+1}{2}\cdot \frac{\lambda_{max}(A)-\lambda_{min^+}(A)}{\lambda_{min^+}(A)}\cdot \frac{\sqrt{2}}{2}\cdot\sqrt{\frac{p}{2}}<1.
\end{eqnarray}
It is easy to get the solution of the inequality (\ref{E4})
\begin{eqnarray}
0<p<p^*(A)=min\left\{1,\frac{16\lambda_{min^+}(A)^2}{(\sqrt 2+1)^2(\lambda_{max}(A)-\lambda_{min^+}(A))^2}\right\}.
\end{eqnarray}
We notice that
\begin{eqnarray}
\notag\|h_{S_0}\|_p&<&k^{\frac{1}{p}-\frac{1}{2}}\|h_{S_0}\|_2\leq k^{\frac{1}{p}-\frac{1}{2}}\cdot\frac{\sqrt 2+1}{2} B\\
&\leq & \frac{\sqrt 2+1}{2}\cdot \frac{\lambda_{max}(A)-\lambda_{min^+}(A)}{\lambda_{min^+}(A)}\cdot \frac{\sqrt{2}}{2}\cdot\sqrt{\frac{p}{2}}\|h_{S_0^C}\|_p.
\end{eqnarray}
It is obvious that $\|h_{S_0}\|_p<\|h_{S_0^C}\|_p$ when $0<p<p^*(A)$, and we can get that
\begin{eqnarray}
\|\breve{x}+h\|_p^p=\|\breve{x}+h_{S_0}\|_p^p+\|h_{S_0^C}\|_p^p\geq \|\breve{x}\|_p^p-\|h_{S_0}\|_p^p+\|h_{S_0^C}\|_p^p >\|\breve{x}\|_p^p.
\end{eqnarray}

The proof is completed
\end{proof}

By Theorem 1, we can take $A=A(m,n,\lambda)$, and $\breve{x}=x^*$ then we can get the following corollary bacause $spark(A(m,n,\lambda))=m+1$.
\begin{corollary}
Given an Vandermonde matrix $A(m,n,\lambda)\in \mathbb{R}^{m \times n}$ with $m \leq n$, and $x^*$ is the solution of the following $l_0$-minimization,
\begin{eqnarray}
\mathop{\min}\limits_{x \in \mathbb{R}^n} \|x\|_0\ s.t. \ A(m,n,\lambda)x=b
\end{eqnarray}
If $\|x^*\|_0< \frac{m+1}{2}$, then we have that
\begin{eqnarray}
\|x^*\|_p^p<\|x^*+h\|_p^p
\end{eqnarray}
for any nonzero vector $h\in N(A(m,n,\lambda))$ and $0<p<p^*(A(m,n,\lambda))$ where the matrix function $p^*(A)$ is defined by (\ref{Function}).
\end{corollary}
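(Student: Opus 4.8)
The plan is to obtain the Corollary as a direct specialization of Theorem 1 to the Vandermonde case, so that essentially no new analysis is required: the whole argument reduces to verifying that the structural hypothesis of Theorem 1 is met for $A=A(m,n,\lambda)$ and that the threshold $p^*$ retains its stated form. First I would set $A=A(m,n,\lambda)$ and $\breve{x}=x^*$, and observe that $A(m,n,\lambda)\in\mathbb{R}^{m\times n}$ with $m\leq n$ is a legitimate instance of the underdetermined matrix appearing in Theorem 1, and that $x^*$ being a solution of $l_0$-minimization matches the role of $\breve{x}$.

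The substantive step is to establish $spark(A(m,n,\lambda))=m+1$, which I would prove by a two-sided bound. For the upper bound, $A(m,n,\lambda)$ has only $m$ rows, so any $m+1$ of its columns are vectors in $\mathbb{R}^m$ and are therefore linearly dependent; hence $spark(A(m,n,\lambda))\leq m+1$. For the lower bound, I would show that \emph{every} collection of $m$ columns is linearly independent: any such collection, together with all $m$ rows, forms a square $m\times m$ submatrix $A(m,n,\lambda)_{I,J}$, which by Corollary \ref{C1} (equivalently, by the classical Vandermonde determinant formula combined with the standing hypotheses $\lambda_i\neq 0$ and the $|\lambda_i|$ pairwise distinct) is invertible. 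Thus no set of $m$ columns is linearly dependent, giving $spark(A(m,n,\lambda))\geq m+1$, and the two inequalities together yield $spark(A(m,n,\lambda))=m+1$.

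With the spark identified, the hypothesis $\|x^*\|_0<\frac{m+1}{2}$ is exactly $\|x^*\|_0<\frac{spark(A(m,n,\lambda))}{2}$, which is the condition $k<\frac{spark(A)}{2}$ demanded by Theorem 1. I would then invoke Theorem 1 verbatim with $A=A(m,n,\lambda)$: it delivers $\|x^*\|_p^p<\|x^*+h\|_p^p$ for every nonzero $h\in N(A(m,n,\lambda))$ and every $0<p<p^*(A(m,n,\lambda))$, where $p^*$ is the matrix function of (\ref{Function}) evaluated at $A(m,n,\lambda)$ simply by substituting $\lambda_{min^+}(A(m,n,\lambda))$ and $\lambda_{max}(A(m,n,\lambda))$ into the already-established formula. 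This is precisely the assertion of the Corollary.

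The only point requiring genuine care — and the step I expect to be the main obstacle — is the lower bound $spark(A(m,n,\lambda))\geq m+1$, since it relies on invertibility of \emph{all} $m\times m$ Vandermonde submatrices, which in turn needs the nodes $\lambda_i$ to be pairwise distinct (in absolute value); this is an implicit standing assumption inherited from Corollary \ref{C1} and should be made explicit. Everything else is a transparent substitution into Theorem 1, so no further estimation of the constants entering $p^*$ is needed.
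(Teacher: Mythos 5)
Your proposal is correct and follows exactly the paper's route: the paper likewise obtains the Corollary by taking $A=A(m,n,\lambda)$ and $\breve{x}=x^*$ in Theorem 1, noting $spark(A(m,n,\lambda))=m+1$. The only difference is that the paper asserts this spark identity without proof, whereas you supply the (correct) two-sided argument via the row count and the invertibility of all $m\times m$ Vandermonde submatrices from Corollary~\ref{C1}.
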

\begin{remark}
In the proof of Theorem 1, the condition $\|\breve{x}\|_0\leq spark(A)/2$ means that $\breve{x}$ is the unique solution of $l_0$-minimization with $b=A\breve{x}$.
\end{remark}
\begin{remark}
We should realize that the conclusion in Theorem 1 considers every vector $x$ with $\|x\|_0<\frac{spark(A)}{2}$. Recall the results based on RIP and RIC, these work only consider $k$-spare solution when the matrix $A$ satisfied RIP of order $2k$, and it is obvious that $k<\frac{spark(A)}{2}$ for such a matrix $A$, but the opposite is not always true. For these reasons, Theorem 1 has a wider applicability than others.
\end{remark}
Theorem 1 is the first main result in this paper, we present an analytic expression of $p$ such that $\|x^*\|_p^p<\|x^*+h\|_p^p$ for any $h\in N(A)$. Theorem 1 also is the basis for other theorems in this paper. We notice that the condition $\|x^*\|_0<\frac{spark(A)}{2}$ is  vital to the proof of Theorem 1, so we consider to construct new matrices based on the old one in the following subsection, and these new matrices have a bigger spark number.
\subsection{Case \uppercase\expandafter{\romannumeral2}}
In this subsection, we will consider the situation where $\frac{m+1}{2}\leq\|x^*\|_0\leq m$. We should realize that this situation is completely different from the situation in Section 3.1, because the matrix $A(m,n,\lambda)$ does not have the ability to recover every $k$-sparse vector when $\frac{m+1}{2}\leq k \leq m$. So the conclusions in Section 3.1 can not be true in Case \uppercase\expandafter{\romannumeral2}.

However, the method in Section 3.1 provides us a way to solve this situation, i,e., we may construct a new matrix composed of $A(m,n,\lambda)$, and the new matrix has the ability to recover every $m$-sparse vector.

For a given Vandermonde matrix $A(m,n,\lambda)\in \mathbb{R}^{m\times n}$, we define a series of matrices $A^{(t)}(m,n,\lambda,x_t,y_t)\in \mathbb{R}^{(2m+2)\times (m+n+2)}$,
\begin{eqnarray}\label{At}
A^{(t)}(m,n,\lambda,x_t,y_t)=\left(
\begin{array}{ccccccccc}
1 & 1 & 1 & \dots & 1 & 0 & 0 & 0& 0\\
\lambda_1 & \lambda_2 & \lambda_3 & \dots & \lambda_n & 0 & 0 & 0 & 0\\
\lambda_1^2& \lambda_2^2 & \lambda_3^2 & \dots & \lambda_n^2 & 0 & 0 & 0 & 0\\
\vdots & \vdots & \vdots & \ddots & \vdots & 0 & 0 & 0 & 0\\
\lambda_1^{m-1} & \lambda_2^{m-1} & \lambda_3^{m-1} & \dots & \lambda_n^{m-1} & 0 & 0 & \ldots & 0 \\
x_t\cdot\lambda_1^m & x_t\cdot\lambda_2^m & x_t\cdot\lambda_3^m & \dots & x_t\cdot\lambda_n^m & 1 & 0 & \ldots & 0 \\
y_t\cdot\lambda_1^{m+1} & y_t\cdot\lambda_2^{m+1} & y_t\cdot\lambda_3^{m+1} & \dots & y_t\cdot\lambda_n^{m+1} & 0 & 1 & 0 & 0 \\
\vdots & \vdots & \vdots & \ddots & \vdots & \vdots & \vdots & \ddots & \vdots \\
y_t\cdot\lambda_1^{2m+1} & y_t\cdot\lambda_2^{2m+1} & y_t\cdot\lambda_3^{2m+1} & \dots & y_t\cdot\lambda_n^{2m+1} & 0 & 0 & \ldots & 1 \\
\end{array}
\right),
\end{eqnarray}
where the sequences $\lim \limits_{t\rightarrow\infty} x_t=0$, $\lim \limits_{t\rightarrow\infty} y_t=0$, $x_t>0$, and $y_t>0$.

Furthermore, we also define a new matrix $A^{(0)}(m,n,\lambda)\in \mathbb{R}^{(2m+2)\times (n+m+2)}$
\begin{eqnarray}\label{At0}
A^{(0)}(m,n,\lambda)=\left(
\begin{array}{ccccccccc}
1 & 1 & 1 & \dots & 1 & 0 & 0 & 0& 0\\
\lambda_1 & \lambda_2 & \lambda_3 & \dots & \lambda_n & 0 & 0 & 0 & 0\\
\lambda_1^2& \lambda_2^2 & \lambda_3^2 & \dots & \lambda_n^2 & 0 & 0 & 0 & 0\\
\vdots & \vdots & \vdots & \ddots & \vdots & 0 & 0 & 0 & 0\\
\lambda_1^{m-1} & \lambda_2^{m-1} & \lambda_3^{m-1} & \dots & \lambda_n^{m-1} & 0 & 0 & \ldots & 0 \\
0 & 0& 0 & \dots & 0 & 1 & 0 & \ldots & 0 \\
0 & 0 & 0 & \dots & 0 & 0 & 1 & 0 & 0 \\
\vdots & \vdots & \vdots & \ddots & \vdots & \vdots & \vdots & \ddots & \vdots \\
0 & 0 & 0 & \dots & 0 & 0 & 0 & \ldots & 1 \\
\end{array}
\right).
\end{eqnarray}

As we have mentioned, the new matrices $A^{(0)}(m,n,\lambda)$ and $A^{(t)}(m,n,\lambda,x_t,y_t)$ based on $A(m,n,\lambda)$ are the key to our result. The following lemma presents us an important property of $A^{(t)}(m,n,\lambda,x_t,y_t)$ which is also important.
\begin{proposition}\label{P1}
If the Vandermonde matrix $A(m,n,\lambda)$ with $|\lambda_i|\neq |\lambda_j|$ ($i\neq j \ and\ i,j\in \{1,2,...,n\}$). Recall the definitions of $A^{(t)}(m,n,\lambda,x_t,y_t)$ $(\ref{At})$, then we have that
\begin{eqnarray}
spark(A^{(t)}(m,n,\lambda,x_t,y_t))=2m+3.
\end{eqnarray}
\end{proposition}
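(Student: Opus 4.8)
The plan is to establish the two inequalities $spark(A^{(t)}) \le 2m+3$ and $spark(A^{(t)}) \ge 2m+3$ separately. Since $A^{(t)}(m,n,\lambda,x_t,y_t)$ has only $2m+2$ rows, its rank is at most $2m+2$, so any collection of $2m+3$ of its columns is linearly dependent; because $2m+2 \le n$ forces the matrix to have $m+n+2 > 2m+2$ columns, such a collection exists and we immediately obtain $spark(A^{(t)}) \le 2m+3$. The real content is therefore the reverse inequality: I must show that \emph{every} choice of $2m+2$ columns is linearly independent, i.e.\ that the corresponding $(2m+2)\times(2m+2)$ submatrix is nonsingular.

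To do this I would exploit the block structure of $A^{(t)}$. The last $m+2$ columns are the standard basis vectors $e_{m+1},\dots,e_{2m+2}$, while each of the first $n$ columns is a scaled Vandermonde vector whose $r$-th entry equals $c_r\lambda_j^{\,r-1}$, where $c_r=1$ for $r\le m$, $c_{m+1}=x_t$, and $c_r=y_t$ for $r\ge m+2$. Fix any $2m+2$ columns, consisting of $a$ Vandermonde-type columns (index set $J$, $|J|=a$) and $b$ identity-type columns, with $a+b=2m+2$, hence $0\le b\le m+2$ and $m\le a\le 2m+2\le n$. Each selected identity column has a single nonzero entry, equal to $1$, located in one of the rows $m+1,\dots,2m+2$. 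Performing a Laplace (cofactor) expansion of the determinant along these $b$ identity columns deletes exactly those $b$ rows and leaves, up to a sign $\pm 1$, an $a\times a$ minor built from the columns $J$ restricted to the surviving row set $R=\{1,\dots,2m+2\}\setminus\{\text{the }b\text{ deleted rows}\}$.

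Factoring the row scalars out of this minor, its determinant equals $\bigl(\prod_{r\in R}c_r\bigr)\det\bigl(\lambda_j^{\,r-1}\bigr)_{r\in R,\,j\in J}$. The second factor is a minor of the tall Vandermonde matrix $A(2m+2,n,\lambda)$, its rows indexed by the power set $\{r-1:r\in R\}\subseteq\{0,\dots,2m+1\}$; by Corollary \ref{C1} this determinant is nonzero, since $|\lambda_i|\neq|\lambda_j|$ for $i\neq j$ and $\lambda_i\neq 0$. The scalar factor $\prod_{r\in R}c_r$ is a product of the positive numbers $1$, $x_t$, $y_t$, hence nonzero because $x_t>0$ and $y_t>0$. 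Therefore the chosen $(2m+2)\times(2m+2)$ submatrix is nonsingular, its columns are independent, and since the argument is uniform over all choices we conclude $spark(A^{(t)})\ge 2m+3$. Combining the two bounds gives the claimed equality.

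The main obstacle I anticipate is organizing the cofactor reduction cleanly. One must verify that selecting identity columns deletes rows only from the bottom block $\{m+1,\dots,2m+2\}$ (never the top $m$ rows, whose entries in the identity block vanish), that the $a$ surviving rows and the $a$ chosen Vandermonde columns form a genuine \emph{square} minor, and that Corollary \ref{C1} indeed covers a generalized Vandermonde minor with possibly non-consecutive powers from $\{0,\dots,2m+1\}$. I expect it is worth spelling out the boundary cases to confirm uniformity: $b=0$ requires all $2m+2$ Vandermonde columns and so uses the hypothesis $n\ge 2m+2$, whereas $b=m+2$ reduces to the original $m\times m$ Vandermonde minor treated by Corollary \ref{C1}.
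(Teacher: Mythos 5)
Your argument is, at its core, the same as the paper's, just organized more transparently. The paper first asserts $spark(A^{(t)}(m,n,\lambda,x_t,y_t))=spark(A^*)$ for an unscaled matrix $A^*$ (true, by scaling row $m+1$ by $x_t^{-1}$ and rows $m+2,\dots,2m+2$ by $y_t^{-1}$ and then rescaling the corresponding identity columns, though the paper gives no justification), and then studies a minimal dependent column set $S^*=S_1^*\cup S_2^*$, invoking Corollary \ref{C1} to assert invertibility of a square matrix built from the chosen Vandermonde columns and a suitable set of power-rows. Your Laplace expansion along the identity columns arrives at exactly the same family of generalized Vandermonde minors, handles $x_t,y_t$ by factoring out positive row scalars rather than by the normalization claim, and treats all selections of $2m+2$ columns uniformly; your upper bound via $\mathrm{rank}\le 2m+2$ replaces the paper's explicit dependent set of size $2m+3$ built from an $h\in N(A(m,n,\lambda))$ with $\|h\|_0=m+1$. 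As bookkeeping, your version is the more convincing of the two.

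However, the caveat you flagged at the end is not a formality: Corollary \ref{C1} fails for minors with non-consecutive row powers, and this breaks the key step in both your proof and the paper's under the stated hypothesis $|\lambda_i|\neq|\lambda_j|$. Lemma \ref{L4} requires all nodes positive; once negative nodes are allowed, a generalized Vandermonde determinant can vanish even though the nodes are nonzero with distinct absolute values. Concretely, for exponents $\{0,1,3\}$ one has $\det\bigl(\lambda_j^{\mu_i}\bigr)=(\lambda_1+\lambda_2+\lambda_3)\prod_{i<j}(\lambda_j-\lambda_i)$, which is zero for the nodes $1,2,-3$. This is not an idle worry about the lemma: take $m=1$, $n=4$, $\lambda=(1,2,-3,5)$. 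The four columns of $A^{(t)}$ given by the first three Vandermonde columns together with the identity column whose $1$ sits in row $3$ are linearly dependent (rows $1,2,4$ impose exactly the singular $\{0,1,3\}$-minor above, e.g.\ via the coefficients $(-5,4,1)$, and row $3$ is absorbed by the identity column), so $spark(A^{(t)})\le 4<2m+3=5$ and the proposition itself is false as stated. Both proofs become correct if the $\lambda_i$ are assumed positive and distinct, so that Lemma \ref{L4} applies directly to the extended matrix $A(2m+2,n,\lambda)$; under the weaker distinct-absolute-value hypothesis the minor-nonvanishing step is not just unproved but unprovable. This defect is inherited from the paper rather than introduced by you, but since you singled out precisely this point as the one to verify, you should know that the verification fails.
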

\begin{proof}
It is easy to get that $rank(A^{(t)}(m,n,\lambda,x_t,y_t))=2m+2$, however it does not mean that $spark(A^{(t)}(m,n,\lambda,x_t,y_t))=2m+3$ since $A^{(t)}(m,n,\lambda,x_t,y_t)$ has a different structure from $A(m,n,\lambda)$.

By the definition of $spark(A^{(t)}(m,n,\lambda,x_t,y_t)$, it is obvious that 
\begin{eqnarray}
spark(A^{(t)}(m,n,\lambda,x_t,y_t)=spark(A^*), 
\end{eqnarray}
where
\begin{center}
$A^*=\left(
\begin{array}{ccccccccc}
1 & 1 & 1 & \dots & 1 & 0 & 0 & 0& 0\\
\lambda_1 & \lambda_2 & \lambda_3 & \dots & \lambda_n & 0 & 0 & 0 & 0\\
\lambda_1^2& \lambda_2^2 & \lambda_3^2 & \dots & \lambda_n^2 & 0 & 0 & 0 & 0\\
\vdots & \vdots & \vdots & \ddots & \vdots & 0 & 0 & 0 & 0\\
\lambda_1^{m-1} & \lambda_2^{m-1} & \lambda_3^{m-1} & \dots & \lambda_n^{m-1} & 0 & 0 & \ldots & 0 \\
\lambda_1^m & \lambda_2^m & \lambda_3^m & \dots & \lambda_n^m & 1 & 0 & \ldots & 0 \\
\lambda_1^{m+1} & \lambda_2^{m+1} & \lambda_3^{m+1} & \dots & \lambda_n^{m+1} & 0 & 1 & 0 & 0 \\
\vdots & \vdots & \vdots & \ddots & \vdots & \vdots & \vdots & \ddots & \vdots \\
\lambda_1^{2m+1} & \lambda_2^{2m+1} & \lambda_3^{2m+1} & \dots & \lambda_n^{2m+1} & 0 & 0 & \ldots & 1 \\
\end{array}
\right).$
\end{center}

Let $S_1=\{1,2,...,n\}$ and $S_2=\{n+1,n+2,...,n+m+2\}$. In order to find the smallest number of columns from $A^*$ which are linearly dependent, we assume that the index set $S^*$ corresponds to $spark(A^*)$. i.e., the columns of $A_{S^*}$ are linearly dependent and $|S^*|=spark(A^*)$.

Let $S^*=S^*_1\bigcup S^*_2$ with $S^*_1\subseteq S_1$ and $S^*_2\subseteq S_2$. It is obvious that $m+1\leq|S^*_1|\leq 2m+3$.
We define some vectors $B^T_i$ ($i\in \{1,2,...,m+2\}$) by
\begin{equation}\label{Bvectors}
\begin{aligned}
&B^T_1=(\lambda_1^m,\lambda_2^m,\cdots ,\lambda_n^m),\\
&B^T_2=(\lambda_1^{m+1},\lambda_2^{m+1},\cdots ,\lambda_n^{m+1}),\\
& \ldots \\
&B^T_{m+2}=(\lambda_1^{2m+1},\lambda_2^{2m+1},\cdots ,\lambda_n^{2m+1}).
\end{aligned}
\end{equation}

We notice that the submatrix $[A^T(m,n,\lambda), B_1, B_2,...,B_{m+2}]^T$ is column full rank and the vectors $B_i$ $(i=1,2,...,m+2)$ are linearly independent.
By Lemma \ref{L4} and Corollary \ref{C1}, the submatrix $[A^T(m,n,\lambda)_{S^*_1}, B_{i_1}, B_{i_2},...,B_{i_{|S^*|-m}}]^T$ is an invertible matrix, where $i_1, i_2,...,i_{|S^*_1|-m}$ are $|S^*_1|-m$ different numbers from $\{1,2,3,...,m+2\}$.

It is easy to get that we can choose a vector $h\in N(A(m,n,\lambda))$ with $\|h\|_0=m+1$, and the set $support(h)\cup S_2$ is one of the methods to get the smallest number of columns from $A^*$ that are linearly dependent. Therefore, $|S_1^*|=m+1$ and $|S_2^*|=m+2$, and we have that $spark(A^*)= 2m+3$.
\end{proof}
\begin{theorem}
For a given Vandermonde matrix $A(m,n,\lambda)\in \mathbb{R}^{m\times n}$, $|\lambda_i|\neq |\lambda_j|$ with ($i,j\in\{1,2,3,...,n\}, i\neq j$). Recall the definitions of $A^{(0)}(m,n,\lambda)$ (\ref{At0}) and $p^*(A)$ (\ref{Function}).
If the solution $x^*$ of $l_0$-minimization with $\frac{m+1}{2}\leq\|x^*\|_0\leq m$, then we have that
\begin{eqnarray}
\|x^*\|_p^p\leq\|x^*+h\|_p^p,
\end{eqnarray}
for every nonzero vector $h\in N(A(m,n,\lambda))$ and $0<p<p^*(A^{(0)}(m,n,\lambda))$.
\end{theorem}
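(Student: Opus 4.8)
The plan is to lift the problem into the higher-dimensional space in which the augmented matrix has a spark large enough to push the (embedded) sparsest vector below half the spark, apply Theorem 1 there, and then let the augmentation parameter tend to its limit so that the spurious perturbation it introduces vanishes. First I would embed: write $\tilde x^*=(x^*,0)\in\mathbb R^{n+m+2}$ by appending $m+2$ zeros, so that $\|\tilde x^*\|_0=\|x^*\|_0\le m$ and $\mathrm{support}(\tilde x^*)=S_0:=\mathrm{support}(x^*)\subseteq\{1,\dots,n\}$. For a nonzero $h\in N(A(m,n,\lambda))$ and each $t\ge 1$ I would build $\tilde h_t=(h,z_t(h))\in N(A^{(t)}(m,n,\lambda,x_t,y_t))$ by solving the last $m+2$ equations: the augmentation rows force $z_t(h)$ to depend linearly on $h$ with every coordinate carrying a factor $x_t$ or $y_t$, so $z_t(h)\to 0$ as $t\to\infty$ because $x_t,y_t\to 0$. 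By Proposition \ref{P1}, $spark(A^{(t)})=2m+3$, hence $\|\tilde x^*\|_0\le m<\tfrac{2m+3}{2}=\tfrac{spark(A^{(t)})}{2}$; by the spark uniqueness bound this already makes $\tilde x^*$ the sparsest solution of $A^{(t)}\tilde x=A^{(t)}\tilde x^*$, so Theorem 1 applies to $A^{(t)}$ with $\breve x=\tilde x^*$.

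Second, I would apply Theorem 1 to $A^{(t)}$ but extract its internal estimate rather than only the final inequality. For $0<p<p^*(A^{(t)})$ the argument of Theorem 1 produces $\|\tilde h_{t,S_0}\|_p<\rho_t(p)\|\tilde h_{t,S_0^C}\|_p$ with $\rho_t(p)=\tfrac{\sqrt2+1}{2}\cdot\tfrac{\lambda_{max}(A^{(t)})-\lambda_{min^+}(A^{(t)})}{\lambda_{min^+}(A^{(t)})}\cdot\tfrac{\sqrt2}{2}\sqrt{\tfrac p2}<1$. Because $S_0\subseteq\{1,\dots,n\}$ we have $\tilde h_{t,S_0}=h_{S_0}$ and $\|\tilde h_{t,S_0^C}\|_p^p=\|h_{S_0^C}\|_p^p+\|z_t(h)\|_p^p$. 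Now I pass to the limit: since every $A^{(t)}$ and the limit $A^{(0)}$ have full row rank $2m+2$, the Gram matrices $A^{(t)}(A^{(t)})^T$ are positive definite and converge to $A^{(0)}(A^{(0)})^T$, so $\lambda_{min^+}(A^{(t)})\to\lambda_{min^+}(A^{(0)})$ and $\lambda_{max}(A^{(t)})\to\lambda_{max}(A^{(0)})$, whence $\rho_t(p)\to\rho_0(p)$ and $p^*(A^{(t)})\to p^*(A^{(0)})$. Fixing $0<p<p^*(A^{(0)})$ guarantees $p<p^*(A^{(t)})$ for all large $t$ and $\rho_0(p)<1$; letting $t\to\infty$ (so $z_t(h)\to 0$) yields $\|h_{S_0}\|_p\le\rho_0(p)\|h_{S_0^C}\|_p$.

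Finally I would recover strictness and conclude. Since $h\ne 0$ and any nonzero null vector of $A(m,n,\lambda)$ has at least $spark(A(m,n,\lambda))=m+1$ nonzeros while $|S_0|=\|x^*\|_0\le m$, the support of $h$ cannot lie inside $S_0$, so $\|h_{S_0^C}\|_p>0$. Combined with $\rho_0(p)<1$ this gives the strict bound $\|h_{S_0}\|_p<\|h_{S_0^C}\|_p$, hence $\|h_{S_0}\|_p^p<\|h_{S_0^C}\|_p^p$, and then $\|x^*+h\|_p^p=\|x^*+h_{S_0}\|_p^p+\|h_{S_0^C}\|_p^p\ge\|x^*\|_p^p-\|h_{S_0}\|_p^p+\|h_{S_0^C}\|_p^p>\|x^*\|_p^p$.

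The main obstacle is the limit step. The single-$t$ inequality carries the spurious term $\|z_t(h)\|_p^p$, which only vanishes as $t\to\infty$, so strictness would be lost if one used merely the final inequality of Theorem 1. The fix is to retain the strict sub-unit factor $\rho_0(p)<1$ together with $\|h_{S_0^C}\|_p>0$, which together resurrect the strict inequality after the limit. Secondary care is needed to justify the continuity of $\lambda_{min^+}$ under the augmentation, which relies on the rank being constantly $2m+2$ along the family and at the limit.
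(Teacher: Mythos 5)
Your argument is correct, and it shares the paper's overall scaffolding --- the lifted matrices $A^{(t)}(m,n,\lambda,x_t,y_t)$, Proposition \ref{P1}, and the convergence $p^*(A^{(t)})\to p^*(A^{(0)})$ along the constant-rank family --- but it diverges at the decisive step of killing the spurious coordinates introduced by the lift. The paper keeps Theorem 1 as a black box: it pads $x^*$ with a single extra \emph{nonzero} entry $(m+1)^{1/\check p}/t$ and calibrates the sequences (\ref{xt}) and (\ref{yt}) to the fixed null vector $\bar h$ so that this entry exactly cancels the first appended coordinate $\hat h(t)_1$ of the lifted null vector while its $p$-mass $(m+1)/t^{\check p}$ dominates $\sum_{i=2}^{m+2}|\hat h(t)_i|^{\check p}$; subtracting these two quantities from the black-box inequality yields the strict conclusion already at a single large $t$, with no limit taken on the inequality itself. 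You instead pad $x^*$ with zeros, reopen the proof of Theorem 1 to extract its internal contraction $\|\tilde h_{t,S_0}\|_p\le\rho_t(p)\,\|\tilde h_{t,S_0^C}\|_p$, pass to the limit $t\to\infty$, and then restore strictness from $\rho_0(p)<1$ together with $\|h_{S_0^C}\|_p>0$, which you correctly obtain from $\|h\|_0\ge spark(A(m,n,\lambda))=m+1>\|x^*\|_0$. The cost of your route is that you must invoke an estimate that the statement of Theorem 1 does not expose, so it is not a clean citation; the gain is robustness: any positive null sequences $x_t,y_t\to0$ suffice, and you sidestep the degenerate case $\langle B_1,\bar h\rangle=0$, in which the paper's formula (\ref{xt}) for $x_t$ is not even well defined. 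Both arguments ultimately rest on the same continuity of $\lambda_{min^+}$ along the family, which you justify cleanly through the positive definite Gram matrices $A^{(t)}(A^{(t)})^T$ of constant size $2m+2$ (and note, in passing, that the paper's assertion $rank(A^{(t)})=2m+3$ is a typo for $2m+2$).
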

\begin{proof}
First of all, we will prove a simple fact that
\begin{equation*}
\lim \limits_{t\rightarrow \infty} p^*(A^{(t)}(m,n,\lambda,x_t,y_t))=p^*(A^{(0)}(m,n,\lambda))
\end{equation*}
It is obvious that $rank(A^{(t)}(m,n,\lambda,x_t,y_t))=2m+3$ for any $t$, and the characteristic polynomial of $A^{(t)}(m,n,\lambda,x_t,y_t)^TA^{(t)}(m,n,\lambda,x_t,y_t)$ always only has $2m+3$ positive roots. So we can conclude that $\lim \limits_{t\rightarrow \infty} \lambda_{max}(A^{(t)}(m,n,\lambda,x_t,y_t))=\lambda_{max}(A^{(0)}(m,n,\lambda))$. and $\lim \limits_{t\rightarrow \infty} \lambda_{min^+}(A^{(t)}(m,n,\lambda,x_t,y_t))=\lambda_{min^+}(A^{(0)}(m,n,\lambda))$.
Therefore, we can get that
\begin{eqnarray}
\lim \limits_{t\rightarrow \infty} p^*(A^{(t)}(m,n,\lambda,x_t,y_t))=p^*(A^{(0)}(m,n,\lambda)).
\end{eqnarray} 

Next, we will prove that 
\begin{eqnarray}
\|x^*\|_{\check{p}}^{\check{p}}<\|x^*+\bar{h}\|_{\check{p}}^{\check{p}}
\end{eqnarray}
for a fixed nonzero vector $\bar{h}\in N(A(m,n,\lambda))$ and a fixed constant $\check{p}\in(0,p^*(A^{(0)}(m,n,\lambda))$.

Recall the definition of $B^T_i$ $(\ref{Bvectors})$, and we define $l_i$ by 
\begin{eqnarray}
l_i=<B_i,\bar{h}>
\end{eqnarray}
for $i=1,2,\ldots m+4$. Without loss of generality, we can assume that $|l_1|\geq|l_2|\geq \ldots |l_{m+2}|$.
Furthermore, we can define $\{x_t\}$ and $\{y_t\}$ by
\begin{eqnarray}\label{xt}
x_t=(m+1)^{\frac{1}{\check{p}}}|l_1|^{-1}\frac{1}{t},
\end{eqnarray}
and
\begin{eqnarray}\label{yt}
y_t=|l_2|^{-1}\frac{1}{t}.
\end{eqnarray}

Since $\lim \limits_{t\rightarrow \infty} p^*(A^{(t)}(m,n,\lambda,x_t,y_t))=p^*(A^{(0)}(m,n,\lambda))$, there exists a big enough number $T$ such that 
\begin{eqnarray}
\check{p}<p^*(A^{(t)}(m,n,\lambda,x_t,y_t))
\end{eqnarray}
for any $t\geq T$. For such $t$, there exists a vector $\hat{h}(t) \in N(A^{(t)}(m,n,\lambda,x_t,y_t))$ such that $\hat{h}^T(t)=(\bar{h}^T,\hat{h}(t)_1,\hat{h}(t)_2,\ldots,\hat{h}(t)_{m+2})$, where $\hat{h}(t)_i\in \mathbb R$ ($i=1,2,\ldots ,m+2$).

By the definition of $\{x_t\}$ (\ref{xt}) and $\{y_t\}$ (\ref{yt}), it is easy to get that
\begin{eqnarray}
\hat{h}(t)_1=-(m+1)^{\frac{1}{\check{p}}}\cdot\frac{1}{t}
\end{eqnarray}
\begin{eqnarray}
\hat{h}(t)_2=-\frac{1}{t}
\end{eqnarray}
and
\begin{eqnarray}
|\hat{h}(t)_i|\leq \frac{1}{t}
\end{eqnarray}
for $i=3,4,\ldots ,m+2$. Therefore, we can get that
\begin{eqnarray}
|\hat{h}(t)_1|^{\check{p}}=(m+1)\frac{1}{t^{\check{p}}}\geq \sum_{i=2}^{m+2}|\hat{h}(t)_i|^{\check{p}}
\end{eqnarray}

Now, we consider a vector $\check{x}(t)^T=(x^{*T}, (m+1)^{\frac{1}{\check{p}}}\cdot\frac{1}{t},0,\ldots , 0)\in \mathbb R ^{n+m+2}$, and it is easy to get that $\|\check{x}(t)\|_0=\|x^*\|_0+1\leq m+1$.

Since $x^*$ is the solution of $l_0$-minimization, it is obvious that $\check{x}(t)$ is also the solution of the responding $l_0$-minimization with a higher dimension. By Theorem 1 and Proposition 1, we can get that $\check{p}<p^*(A^{(t)}(m,n,\lambda))$ for a big enough $t$. Since $spark(A^{(t)}(m,n,\lambda,x_t,y_t))=2m+3$, we have that
\begin{eqnarray}
\notag \|\check{x}(t)\|_{\check{p}}^{\check{p}}=\|x^*\|_{\check{p}}^{\check{p}}+(m+1)\frac{1}{t^{\check{p}}}&<&\|\check{x}(t)+\hat{h}(t)\|_{\check{p}}^{\check{p}},\\
&=&\|x^*+\bar{h}\|_{\check{p}}^{\check{p}}+\sum_{i=2}^{m+2}|\hat{h}(t)_i|^{\check{p}}.
\end{eqnarray}

It is obvious that $\|x^*\|_{\check{p}}^{\check{p}}<\|x^*+\bar{h}\|_{\check{p}}^{\check{p}}$.
Therefore, the proof is completed.
\end{proof}
\begin{remark}
The conclusion $\lim \limits_{t\rightarrow \infty} \lambda_{min^+}(A^{(t)}(m,n,\lambda,x_t,y_t))=\lambda_{min^+}(A^{(0)}(m,n,\lambda))$ is of greatest importance while RIC and RIP do not always satisfy this limit property.
\end{remark}
\begin{remark}
In order to construct such vector $\lambda^*\in \mathbb R^{2m+2}$, the absolute value of the constant $\lambda_i$ $(i\in \{m+1,m+2,...,2m+2\})$ just need to be different from that of $\lambda_j$ $(j\in\{1,2,...,m\})$. So there exist a lot of vectors meeting the this condition. It is worthy studying how to find a reasonable vector to get the optimal $p^*(A^{(0)}(m,n,\lambda^*))$.
\end{remark}
\section{Conclusion}
In this paper, we present an analytic expression of $p$ for a given Vandermonde matrix $A(m,n,\lambda)\in \mathbb R^{m \times n}$ such that the solutions of $l_0$-minimization is the solution of $l_p$-minimization. Different to related work based on RIP, we fundamentally give a answer to this equivalence problem between $l_0$-minimization and $l_p$-minimization, and the solution of $l_0$-minimization $x^*$ do not need to be assumed to be unique solution and $\|x\|_0<\frac{spark(A)}{2}$.

As we have already mentioned, RIP and RIC only consider the case when $l_0$-minimization only has an unique solution and these two concepts require $2\|x\|_0<spark(A)$. Different to the results based on RIP and RIC, we do not need the uniqueness assumption and we consider a more general case including the case where $\|x\|_0\geq\frac{1}{2}spark(A)$.

It should be pointed out that Theorem 1 is also can be used in any underdetermind matrix $A$ with $\|x^*\|<\frac{spark(A)}{2}$, including the matrices with RIP of $2k$ order and $\|x^*\|_0\leq k$. The advantage of our result its computability, i.e., each part in this analytic expression can be easily calculated. As we know, to calculate RIC for a given matrix which is satisfied with RIP is also NP-hard. The authors think that the method used in Section 3 can also be used in other types of matrices. In conclusion, the authors hope that in publishing this paper, a brick will be thrown out and be replaced with a gem.
\bibliographystyle{plain}
\bibliography{x}
\end{document}